\numberwithin{equation}{section}
\newtheorem{thm}{Theorem}[section]
\newtheorem{prop}{Proposition}[section]
\newtheorem{cor}{Corollary}[section]
\theoremstyle{definition}
\theoremstyle{remark}
\newtheorem{rem}{Remark}[section]
\newcommand{\Ric}{\mbox{Ric}}
\newcommand{\R}{\mathbb R}
\newcommand{\be}{\begin{equation}}
\newcommand{\ee}{\end{equation}}
\newcommand{\bee}{\begin{equation*}}
\newcommand{\eee}{\end{equation*}}
\newcommand{\bal}{\begin{aligned}}
\newcommand{\eal}{\end{aligned}}
\def\p{\partial}
\def\Pi{\displaystyle{\mathbb{II}}}
\def\m{\mathfrak{m}}
\def\c{\mathfrak{c}}
\def\C{\mathcal{C}}
\begin{document}

\title{Implications of some mass-capacity inequalities}

\author{Pengzi Miao}
\address[Pengzi Miao]{Department of Mathematics, University of Miami, Coral Gables, FL 33146, USA}
\email{pengzim@math.miami.edu}

\begin{abstract}
Applying a family of mass-capacity related inequalities proved in \cite{M22},  
we obtain sufficient conditions that imply 
the nonnegativity as well as positive lower bounds of the mass, 
on a class of manifolds with nonnegative scalar curvature, with or without a singularity.  
\end{abstract}

\maketitle

\markboth{Pengzi Miao}{Implications of some mass-capacity inequalities}

\section{Introduction} 

A smooth Riemannian $3$-manifold $(M, g)$ is called asymptotically flat (AF)  if $M$,  
outside a compact set, is diffeomorphic to $ \R^3$ minus a ball;
the associated metric coefficients  satisfy   
$$
g_{ij} = \delta_{ij} + O ( |x |^{-\tau} ), \ 
 \p g_{ij} = O ( |x|^{-\tau -1}) ,  \ \ \p \p g_{ij} = O (|x|^{-\tau -2} ), 
$$
for some $ \tau > \frac12$; and the scalar curvature  of $g$ is  integrable.
Under these AF conditions, the limit, near $\infty$, 
$$
\m  = \lim_{ r \to \infty  } \frac{1}{16 \pi} \int_{ |x | = r } \sum_{j, k} ( g_{jk,j} -  g_{jj,k} ) \frac{x^k}{ |x| } 
$$
exists and is called the ADM mass  \cite{ADM61} of $(M,g)$. 
It is a result of Bartnik \cite{Bartnik86}, and of Chru\'{s}ciel \cite{Chrusciel86}, 
that $\m$ is a geometric invariant, independent on the choice of the coordinates $\{ x_i \}$.

A fundamental result on the mass and the scalar curvature is the Riemannian positive mass theorem (PMT): 

\begin{thm}[\cite{SchoenYau79, Witten81}] \label{thm-PMT}
Let $(M, g)$ be a complete, asymptotically flat $3$-manifold with nonnegative scalar curvature without boundary. Then
$$ \m \ge 0  , $$
and equality  holds if and only if $(M, g)$ is isometric to the Euclidean space $ \R^3$. 
\end{thm}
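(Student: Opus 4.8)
The plan is to establish Theorem~\ref{thm-PMT} via Witten's spinorial argument. Every orientable $3$-manifold is parallelizable and hence admits a spin structure, so $M$ carries a spinor bundle with Dirac operator $D$ and spinor connection $\nabla$. The idea is to produce a harmonic spinor asymptotic to a fixed parallel spinor at infinity and then read off the mass from the Lichnerowicz--Weitzenb\"ock identity
\[
D^2 = \nabla^* \nabla + \tfrac14 R ,
\]
in which the scalar curvature $R \ge 0$ enters with a favorable sign.

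First I would fix, in the asymptotically flat end, a constant spinor $\psi_0$ with $|\psi_0| = 1$, and look for a spinor $\psi = \psi_0 + \xi$ with $\xi \to 0$ at infinity solving $D\psi = 0$; equivalently $D\xi = -D\psi_0$. Since $\psi_0$ is parallel with respect to the flat structure, $D\psi_0 = O(|x|^{-\tau - 1})$ lies in a suitable weighted $L^2$ space, and the Fredholm theory of $D$ on weighted Sobolev spaces over an AF manifold, together with the fact that the Lichnerowicz identity with $R \ge 0$ forces the kernel of $D$ among decaying spinors to be trivial, provides a unique solution $\xi$ with the decay $\xi = O(|x|^{-\tau})$ and $\nabla \xi = O(|x|^{-\tau - 1})$.

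Next I would integrate the Witten one-form $\omega(X) = \langle \nabla_X \psi + X \cdot D\psi, \, \psi \rangle$. Taking its divergence and using $D\psi = 0$ together with the Lichnerowicz identity yields a pointwise identity whose integral over the region $M_r = \{ |x| \le r \}$ gives, after letting $r \to \infty$,
\[
4\pi \, \m = \int_M \Big( |\nabla \psi|^2 + \tfrac14 R \, |\psi|^2 \Big) \, dV .
\]
The computation of the boundary flux of $\omega$ on the sphere $|x| = r$ is the heart of Witten's argument: in the chosen coordinates and frame it reduces, modulo terms decaying faster than $|x|^{-2}$, to precisely the ADM integrand, so the limiting flux equals $4\pi \m \, |\psi_0|^2 = 4\pi \m$. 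Since the right-hand side above is nonnegative, $\m \ge 0$ follows immediately.

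For the rigidity statement, $\m = 0$ forces $\nabla \psi \equiv 0$ and $R \equiv 0$ for the spinor associated with \emph{every} unit constant $\psi_0$. A basis of such parallel spinors trivializes the spinor bundle by a flat connection, which forces the full curvature tensor of $g$ to vanish; thus $(M, g)$ is flat, and a complete flat manifold carrying a single asymptotically flat end must be isometric to $\R^3$. The main obstacle I anticipate is making the boundary-term computation rigorous: one has to verify that the interior integral converges and that every lower-order term on $|x| = r$ decays fast enough to be discarded, which is exactly where the decay rate $\tau > \tfrac12$ and the integrability of $R$ are used. An alternative is the Schoen--Yau route: assuming $\m < 0$, one constructs an area-minimizing, asymptotically planar surface forced to exist by the negative mass, and derives a contradiction from the stability inequality via Gauss--Bonnet and $R \ge 0$; there the corresponding difficulty is controlling the regularity and asymptotics of that minimal surface.
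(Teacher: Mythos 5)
Your sketch is essentially the classical Witten argument and is sound in outline, but note first that the paper does not prove Theorem~\ref{thm-PMT} at all: it is quoted from Schoen--Yau and Witten, and the only derivation indicated in the text is a third, spinor-free route from \cite{M22}: apply the mass-to-capacity inequality \eqref{eq-main-m-c-ratio} to the exterior of small geodesic spheres $S_r$ (or the perturbed spheres $\Sigma_r$ of Mondino and Templeton-Browne) centered at a point $p$, use $\int_{S_r} H^2 \to 16\pi$ as $r \to 0$ to conclude $\m \ge 0$, and read the rigidity of $\m = 0$ off the expansion \eqref{eq-m-c-ratio-small-p-ball}. That route rests on monotone quantities along level sets of harmonic capacitary functions (in the spirit of \cite{BKKS, AMO21}) rather than on Dirac operators, at the price of a topological hypothesis such as $M$ being $\R^3$ or condition $(\ast)$. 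Your proposal, by contrast, is correct in its main steps: weighted Fredholm theory for $D$, triviality of the decaying kernel via the Lichnerowicz identity with $R \ge 0$, the flux identity $4\pi\,\m\,|\psi_0|^2 = \int_M \big( |\nabla \psi|^2 + \tfrac14 R\,|\psi|^2 \big)$, and rigidity via a basis of parallel spinors (in dimension $3$ one already gets flatness from $\Ric = 0$), with $\tau > \tfrac12$ and $R \in L^1$ entering exactly where you locate them. Two small points you should tighten: the theorem as stated does not assume orientability, so either pass to the orientable double cover or note that the argument is run under an orientability hypothesis (as the rest of this paper does); and the final claim that a complete flat $3$-manifold with one asymptotically flat end is isometric to $\R^3$ deserves a sentence (for instance, Euclidean volume growth forces the fundamental group to be trivial). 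As for what each approach buys: yours extends to spin manifolds in all dimensions and to the spacetime positive energy theorem, while the capacity and harmonic-function route featured here is what produces the quantitative refinements \eqref{eq-main-m-c-ratio}--\eqref{eq-gdu-mc} that are the actual subject of the paper.
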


On an asymptotically flat $3$-manifold $(M,g)$ with boundary $ \Sigma = \p M $, 
the capacity (or $L^2$-capacity) of $\Sigma $ is defined by 
$$
\c_{_\Sigma} = \inf_f \,  \left\{  \frac{1}{4 \pi} \int_M | \nabla f |^2  \right\}, 
$$
where the infimum is taken over all locally Lipschitz functions $f$ that vanishes on $\Sigma$ and tend to $1$ at infinity. 
Equivalently, if $\phi$ denotes the function with 
$$
\Delta \phi = 0 , \ \ 
 \phi |_\Sigma = 1 , \ \text{and} \  
 \phi \to 0  \ \ \text{at} \  \ \infty , 
$$
then, 
$ \displaystyle  \c_{_\Sigma} = \frac{1}{4 \pi} \int_M | \nabla \phi |^2  = \frac{1}{4\pi} \int_{\Sigma} | \nabla \phi| $, 
and
$$ \phi = \c_{_\Sigma} |x|^{-1} + o ( | x|^{-1} ) , \ \  \text{as} \ x \to \infty . $$ 

Regarding the mass and the capacity, if $\Sigma$ is a minimal surface, 
Bray showed

\begin{thm}[\cite{Bray02}] \label{thm-Bray}
Let $(M, g)$ be a complete, asymptotically flat $3$-manifold with nonnegative scalar curvature, with minimal surface boundary $\Sigma = \p M$. Then
$$ \m \ge \c_{_\Sigma}  ,$$
and equality   holds iff $(M, g)$ is isometric to a spatial Schwarzschild manifold 
outside the horizon. 
\end{thm}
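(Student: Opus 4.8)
The plan is to reduce the inequality to the boundaryless positive mass theorem (Theorem~\ref{thm-PMT}) by first doubling $(M,g)$ across the minimal boundary and then conformally compactifying one of the two resulting ends with the help of the capacity potential $\phi$. First I would record the properties of $\phi$ coming from the definition of $\c_{_\Sigma}$: it is harmonic, satisfies $0<\phi<1$ in the interior by the maximum principle, equals $1$ on $\Sigma$, and has the expansion $\phi=\c_{_\Sigma}|x|^{-1}+o(|x|^{-1})$ at infinity. Next, since $\Sigma$ is a minimal surface, I would double $(M,g)$ across $\Sigma$ to obtain $(\hat g,\hat M)$, a manifold with two asymptotically flat ends $\infty_{\pm}$, each of ADM mass $\m$. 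The doubled metric $\hat g$ is only Lipschitz across $\Sigma$, but because $\Sigma$ is minimal the singular mean-curvature contribution vanishes, so $\hat g$ carries nonnegative scalar curvature in the distributional sense along $\Sigma$ (the corner situation treated by Miao's positive-mass-with-corners result).

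I would then transplant $\phi$ to the double by reflection: set $\hat\phi=\phi$ on the original copy and $\hat\phi=2-\phi$ on the reflected copy. The values and normal fluxes match across $\Sigma$, so $\hat\phi$ is a (weakly) harmonic function on $(\hat M,\hat g)$ with $\hat\phi\to 0$ at $\infty_+$ and $\hat\phi\to 2$ at $\infty_-$. Put $w=1-\tfrac12\hat\phi$, a positive harmonic function with $w\to 1$ at $\infty_+$ and $w\to 0$ at $\infty_-$, and form $\bar g=w^4\hat g$. Harmonicity of $w$ together with $R_{\hat g}\ge 0$ gives, via $R_{\bar g}=w^{-5}\left(-8\Delta_{\hat g}w+R_{\hat g}\,w\right)=w^{-4}R_{\hat g}\ge 0$, that $\bar g$ has nonnegative scalar curvature (including the distributional part along $\Sigma$, since the conformal factor is positive and continuous there). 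Near $\infty_-$ one has $w=\tfrac12\c_{_\Sigma}|x|^{-1}+o(|x|^{-1})$, so $\bar g$ shrinks that end to a single point which, after the standard inversion, fills in as a regular, flat-to-leading-order point; near $\infty_+$ one has $w=1-\tfrac12\c_{_\Sigma}|x|^{-1}+o(|x|^{-1})$, so $\bar g$ stays asymptotically flat there, and the conformal change-of-mass formula $\m_{\bar g}=\m+2a$ for $w=1+a|x|^{-1}+o(|x|^{-1})$ gives $\m_{\bar g}=\m+2\left(-\tfrac12\c_{_\Sigma}\right)=\m-\c_{_\Sigma}$.

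The resulting $(\bar M,\bar g)$ is then a complete, asymptotically flat, boundaryless $3$-manifold with nonnegative scalar curvature and a single end, so Theorem~\ref{thm-PMT} yields $\m_{\bar g}\ge 0$, that is $\m\ge\c_{_\Sigma}$. For the rigidity statement, equality forces $\m_{\bar g}=0$, whence $(\bar M,\bar g)$ is isometric to flat $\R^3$; reversing the conformal changes, $1/w$ becomes a positive Euclidean-harmonic function on $\R^3$ minus a point with a single $|x|^{-1}$ pole, so $\hat g=(1+k|x|^{-1})^4\delta$ is a spatial Schwarzschild metric and $(M,g)$ is Schwarzschild outside its horizon. (In the Schwarzschild case one checks directly that $w=1/u$ with $u=1+\tfrac{m}{2}|x|^{-1}$, so $\bar g=w^4\hat g=\delta$ and $\m_{\bar g}=m-m=0$, confirming the construction.)

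The hard part will be the low regularity created by the doubling: I must make rigorous that $\hat g$, and then $\bar g$, carry nonnegative scalar curvature as distributions across the Lipschitz corner $\Sigma$, that $\hat\phi$ and $w$ are genuine weak harmonic functions there, and that the compactified end $\infty_-$ is a removable point to which the positive mass theorem applies. I expect this to be handled either by invoking the positive-mass-theorem-with-corners directly, or by first mollifying $\hat g$ near $\Sigma$ to restore smooth $R\ge 0$ and passing to the limit; the removable-singularity analysis at the filled point is standard given the assumed asymptotic decay.
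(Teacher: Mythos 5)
This theorem is quoted in the paper from \cite{Bray02} without proof, so there is no in-paper argument to compare against; what your proposal reconstructs is, in all essentials, Bray's own original proof (Theorems 8 and 9 of \cite{Bray02}): double across the minimal boundary, extend the capacity potential by the reflection $\phi \mapsto 2-\phi$, pass to $w = 1 - \tfrac12\hat\phi$, conformally compactify the reflected end via $\bar g = w^4 \hat g$, and apply the boundaryless positive mass theorem together with the change-of-mass formula $\m_{\bar g} = \m + 2a$, which you state and use correctly (giving $\m_{\bar g} = \m - \c_{_\Sigma}$, since $w = 1 - \tfrac12 \c_{_\Sigma}|x|^{-1} + o(|x|^{-1})$ at the surviving end). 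Your gluing checks are right: values and normal fluxes of $\hat\phi$ match across $\Sigma$, so $w$ is $C^1$ there, and since both sides of the Lipschitz corner have vanishing mean curvature, the distributional nonnegativity of the scalar curvature survives the doubling and the conformal change (note that $w$ itself is \emph{not} reflection-symmetric, as $w\circ\mathrm{refl} = 1-w$, but $C^1$ regularity across $\Sigma$ is all that is needed). Your rigidity sketch via Bôcher's theorem applied to the Euclidean-harmonic function $1/w$, recovering Schwarzschild with $\Sigma = \{w = \tfrac12\}$ as the horizon, also matches Bray's treatment.

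The one place where your fallback plan is weaker than the actual argument is the compactified point at $\infty_-$: mollifying $\hat g$ near that point can destroy $R \ge 0$, and the corner positive mass theorem addresses hypersurface corners, not point singularities, so neither of your two suggested fixes works as stated. The clean resolution, and the one Bray uses, is to first invoke the Schoen--Yau density reduction to metrics that are harmonically flat near infinity, i.e.\ $\hat g = U^4\delta$ exactly near $\infty_-$ with $U$ Euclidean-harmonic; then $wU$ is Euclidean-harmonic with leading term $\tfrac12\c_{_\Sigma}|x|^{-1}$, $\bar g = (wU)^4\delta$ there, and the Kelvin inversion extends $\bar g$ genuinely smoothly across the filled-in point, so no removable-singularity analysis is needed for the inequality. (Care with the equality case under this density reduction is then required, as rigidity is not directly preserved by approximation; Bray handles this separately.)
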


In \cite[Theorem 7.4]{M22}, an inequality relating 
the mass-to-capacity ratio to the Willmore functional of the boundary
was obtained: 

\begin{thm} [\cite{M22}] \label{thm-intro-1-1}
 Let $(M, g)$ be a complete, orientable, asymptotically flat $3$-manifold with one end, with boundary $\Sigma$.
Suppose $ \Sigma$ is connected and $H_2 (M, \Sigma) = 0$.
If $ g$ has nonnegative scalar curvature, then
\be \label{eq-main-m-c-ratio}
 \frac{ \m} {  \c_{_\Sigma} }   \ge 1  -  \left( \frac{1}{16\pi}  \int_\Sigma   H^2 \right)^\frac12  .
\ee
Here $\m$ is the mass of $(M,g)$, $\c_{_\Sigma}$ is the capacity of $\Sigma$ in $(M,g)$,  
and $H$ is the mean curvature of $\Sigma$. 
Moreover, equality in \eqref{eq-main-m-c-ratio} holds if and only if 
$(M, g)$ is isometric to a spatial Schwarzschild manifold outside a rotationally symmetric sphere with
nonnegative mean curvature.
\end{thm}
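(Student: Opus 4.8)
The plan is to analyze the level sets of the capacity potential $\phi$ and to extract the inequality from a monotonicity/integral identity driven by the nonnegativity of the scalar curvature. Write $u = |\nabla \phi|$ and, for a regular value $t \in (0,1)$, let $\Sigma_t = \{\phi = t\}$, oriented by $\nu = \nabla\phi / u$; thus $\Sigma_1 = \Sigma$ while $\Sigma_t$ approaches the coordinate sphere near infinity as $t \to 0$. The hypotheses that $\Sigma$ is connected and $H_2(M,\Sigma) = 0$ enter precisely here: they force the regular level sets $\Sigma_t$ to be connected, so that the Gauss--Bonnet theorem gives $\int_{\Sigma_t} K_t \, d\sigma_t = 2\pi \chi(\Sigma_t) \le 4\pi$, where $K_t$ is the Gauss curvature and $d\sigma_t$ the area element. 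A preliminary observation is that, since $\Delta \phi = 0$, the flux $\int_{\Sigma_t} u \, d\sigma_t = \int_{\Sigma_t} \partial_\nu \phi \, d\sigma_t = 4\pi \c_{_\Sigma}$ is independent of $t$; this is the bridge between the level-set data and the capacity.

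The analytic engine is the Bochner formula $\frac12 \Delta u^2 = |\Hess \phi|^2 + \Ric(\nabla\phi, \nabla\phi)$. Decomposing $\Hess \phi$ in the frame adapted to $\Sigma_t$ (using $\phi_{\nu\nu} = -H u$ from $\Delta\phi = 0$, and that the tangential part of $\Hess\phi$ is $u$ times the second fundamental form $A_t$ of $\Sigma_t$), together with the contracted Gauss equation $2\Ric(\nu,\nu) = R - 2K_t + H^2 - |A_t|^2$, rewrites the right-hand side as a combination of $u^2 R$, $u^2 K_t$, $u^2 |A_t|^2$, $u^2 H^2$ and $|\nabla^{\Sigma} u|^2$. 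I would then integrate this identity against a suitable power of $\phi$ (or of $u$) over the region between two level sets and apply the coarea formula, choosing the weight so that: (i) the terms quadratic in $A_t$ organize into a nonnegative trace-free square up to the derivative of the monotone quantity; (ii) the term $u^2 R$ contributes with a favorable sign via $R \ge 0$; and (iii) the Gauss term is absorbed using $\int_{\Sigma_t}K_t \le 4\pi$. The outcome should be a quantity $Q(t)$, built from $\int_{\Sigma_t} u^2$, $\int_{\Sigma_t} H u$ and the conserved flux, that is monotone in $t$.

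It then remains to evaluate the endpoints. As $t \to 0$, inserting the asymptotically flat expansion $\phi = \c_{_\Sigma} |x|^{-1} + o(|x|^{-1})$ and its derivatives into $Q$ should make the ADM mass $\m$ emerge from the subleading terms of the expansions of the relevant boundary integrals. At $t = 1$, $Q$ reduces to data on $\Sigma$ involving $\int_\Sigma H u$ and $\int_\Sigma u = 4\pi \c_{_\Sigma}$. The square root $(\frac{1}{16\pi}\int_\Sigma H^2)^{1/2}$ is then forced out by the Cauchy--Schwarz inequality $\int_\Sigma H u \le (\int_\Sigma H^2)^{1/2}(\int_\Sigma u^2)^{1/2}$ together with the flux normalization; assembling this with $Q(1) \ge \lim_{t\to 0} Q(t)$ yields \eqref{eq-main-m-c-ratio}. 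As a consistency check, when $\Sigma$ is minimal ($H \equiv 0$) the bound becomes $\m \ge \c_{_\Sigma}$, recovering Theorem \ref{thm-Bray}.

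The main obstacle will be twofold. First, identifying the exact weight and monotone quantity $Q(t)$ so that the $|A_t|^2$-terms close into a perfect square and the constants align to give precisely the coefficient $\frac{1}{16\pi}$ is the delicate algebraic core of the argument. Second, the level sets $\Sigma_t$ need not be regular for all $t$, so I must control the critical set of $\phi$ — via Sard's theorem with a limiting argument, or by working in the weak Sobolev formulation — to justify the integration by parts and the monotonicity across critical values. For the rigidity statement, equality forces every estimate above to be saturated: $R \equiv 0$, each $\Sigma_t$ totally umbilic with $\int_{\Sigma_t}K_t = 4\pi$ (hence round), and equality in Cauchy--Schwarz (so $H \propto u$ on $\Sigma$). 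These conditions render $(M,g)$ rotationally symmetric with vanishing scalar curvature, which together with asymptotic flatness identifies it as a spatial Schwarzschild manifold, with $\Sigma$ a rotationally symmetric sphere of nonnegative mean curvature.
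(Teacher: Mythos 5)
Your outline reproduces, at the level of strategy, exactly the mechanism behind this theorem — but note that the paper itself does not reprove \eqref{eq-main-m-c-ratio} from first principles. It obtains Theorem \ref{thm-intro-1-1} by concatenating the two inequalities \eqref{eq-gdu-H} and \eqref{eq-gdu-mc} of \cite{M22}: writing $e = \bigl( \frac{1}{4\pi}\int_\Sigma |\nabla u|^2 \bigr)^{1/2}$ and $w = \bigl( \frac{1}{16\pi}\int_\Sigma H^2 \bigr)^{1/2}$, inequality \eqref{eq-gdu-H} says $2e \le w + 1$, so \eqref{eq-gdu-mc} gives $\m / (2\c_{_\Sigma}) \ge 1 - e \ge (1-w)/2$, which is \eqref{eq-main-m-c-ratio}. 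Those two inputs are in turn proved in \cite{M22} by precisely the ingredients you list: level sets of the harmonic potential (the paper works with $u = 1-\phi$ rather than $\phi$, which is immaterial), connectedness of regular level sets forced by $H_2(M,\Sigma)=0$ feeding Gauss--Bonnet, the Bochner identity with the traced Gauss equation, the conserved flux $\int_{\Sigma_t} |\nabla u| = 4\pi \c_{_\Sigma}$, weighted monotone quantities, the mass emerging in the limit at infinity, and Cauchy--Schwarz on $\int_\Sigma H |\nabla u|$ producing the Willmore term. So the plan is the right one, and your consistency check against Theorem \ref{thm-Bray} is correct.

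The genuine gap is that you never construct the monotone quantity $Q(t)$, and what you call ``the delicate algebraic core'' is in fact the entire proof; everything else in your outline is standard machinery shared with \cite{BKKS, AMO21}. The difficulty is not mere bookkeeping of constants. A quantity depending \emph{linearly} on the level-set energy — the natural outcome of integrating a weighted Bochner identity as you describe — yields only a bound of the shape $\m/(2\c_{_\Sigma}) \ge \frac12 \bigl( 1 - \frac{1}{4\pi}\int_\Sigma |\nabla u|^2 \bigr)$, which for all $e \in [0,1)$ is strictly weaker than the square-root form $\m/(2\c_{_\Sigma}) \ge 1 - e$ of \eqref{eq-gdu-mc}, since $\frac12(1-e^2) \le 1 - e$. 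Reaching the sharp statement requires the specific $(1-t)$-normalized quantities of \cite{M22} — compare $\mathcal{B}(t) = \frac{1}{1-t}\bigl[ 4\pi - \frac{1}{(1-t)^2}\int_{\Sigma_t}|\nabla u|^2 \bigr]$, quoted in Section \ref{sec-pmt-cm-positive} — together with the quadratic resolution after Cauchy--Schwarz that produces \eqref{eq-gdu-H}; asserting that ``a suitable weight'' exists does not supply this. Two further steps you defer are also substantive rather than routine: extracting $\m$ from the endpoint limit needs an expansion of the potential beyond $u = 1 - \c_{_\Sigma}|x|^{-1} + o(|x|^{-1})$ (decay of derivatives and control of the next-order term under the stated $\tau > \frac12$ asymptotics), and monotonicity across critical values of $u$ requires the Sard-plus-absolute-continuity argument, not a parenthetical remark. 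Your rigidity discussion inherits the same incompleteness, since saturating ``every estimate'' presupposes knowing which estimates the explicit $Q$ actually uses. As written, this is a faithful roadmap of the \cite{M22} strategy, not yet a proof of the theorem.
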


As shown in \cite{M22}, \eqref{eq-main-m-c-ratio} implies the 
$3$-dimensional PMT.  
For instance, assuming $M$ is topologically $ \R^3$, applying \eqref{eq-main-m-c-ratio} to 
the exterior of a geodesic sphere $S_r$  with radius $r $ centered at any point $p \in M$, one has 
\be \label{eq-basic-app}
 \frac{ \m} {  \c_{_{S_r} } }   \ge  1  -  \left( \frac{1}{16\pi}  \int_{S_r}   H^2 \right)^\frac12  .
\ee
Letting $ r \to 0$,  one obtains $ \m \ge 0 $. 
Earlier proofs of $3$-d PMT via harmonic functions 
were given by Bray-Kazaras-Khuri-Stern  \cite{BKKS}
and Agostiniani-Mazzieri-Oronzio \cite{AMO21}.

\vspace{.2cm}

Theorem \ref{thm-intro-1-1} follows from two other results (Corollary 7.1 and Theorem 7.3) in \cite{M22}:  

\begin{thm} [\cite{M22}]
Let $(M, g)$ be a complete, orientable, asymptotically flat $3$-manifold with one end, with connected 
boundary $\Sigma$, satisfying  $H_2 (M, \Sigma) = 0$. If $ g$ has nonnegative scalar curvature, then 
\be \label{eq-gdu-H} 
 \left( \frac{1}{\pi} \int_\Sigma  |  \nabla u |^2 \right)^\frac12  \le  \left( \frac{1}{16\pi}  \int_\Sigma   H^2 \right)^\frac12  + 1 ,
\ee
and
\be \label{eq-gdu-mc}
\frac{\m}{ 2  \c_{_\Sigma} } \ge 1 - 
\left( \frac{1}{4 \pi} \int_{\Sigma} | \nabla u |^2 \right)^\frac12 .
\ee
Here $ u$ is the harmonic function with $ u = 0 $ at $ \Sigma$ and $ u \to 1 $ near $\infty$.
Moreover,  
\begin{itemize}
\item  equality in \eqref{eq-gdu-H} holds if and only if 
 $(M, g)$ is isometric to a spatial Schwarzschild manifold  outside a rotationally symmetric sphere
 with nonnegative mean curvature; 
 \item  equality in \eqref{eq-gdu-mc} holds if and only if 
 $(M, g)$ is isometric to a spatial Schwarzschild manifold  outside a rotationally symmetric sphere. 
\end{itemize} 
 \end{thm}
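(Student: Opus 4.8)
The plan is to analyze the level sets $\Sigma_t = \{u = t\}$, $t \in [0,1)$, of the harmonic function $u$, and to read off both \eqref{eq-gdu-H} and \eqref{eq-gdu-mc} as boundary-versus-infinity comparisons of monotone functionals of $\Sigma_t$. Two normalizations come first. Since $\Delta u = 0$, the flux $\int_{\Sigma_t}|\nabla u|$ is independent of $t$ and equals $4\pi\c_{_\Sigma}$ (indeed $u = 1-\phi$ for the capacitary potential $\phi$), and the expansion $u = 1 - \c_{_\Sigma}|x|^{-1} + O(|x|^{-2})$, whose $|x|^{-2}$-coefficient is governed by $\m$, yields explicit asymptotics as $t \to 1$ for the area of $\Sigma_t$, for $\frac{1}{16\pi}\int_{\Sigma_t}H^2 \to 1$ and for $\frac{1}{4\pi}\int_{\Sigma_t}|\nabla u|^2 \to 0$; in particular $\frac{2\m}{\c_{_\Sigma}} = \lim_{t\to1}(1 - \frac{1}{16\pi}\int_{\Sigma_t}H^2)(\frac{1}{4\pi}\int_{\Sigma_t}|\nabla u|^2)^{-1/2}$. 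Before anything else I would run the computation on the Schwarzschild exterior, where $|\nabla u|$ is constant on each round slice $\Sigma_t$: this verifies that both inequalities are equalities there for every $t$ and pins down which error terms vanish in the model, hence which terms may not be thrown away in a sharp estimate.

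The engine is the pointwise identity obtained by combining the Bochner formula $\frac12\Delta|\nabla u|^2 = |\Hess u|^2 + \Ric(\nabla u,\nabla u)$ with the Gauss equation on $\Sigma_t$. With $\nu = \nabla u/|\nabla u|$, using $\Ric(\nabla u,\nabla u) = |\nabla u|^2\,\Ric(\nu,\nu)$ and $\Ric(\nu,\nu) = \frac12 R_g - K_{\Sigma_t} + \frac12(H^2 - |\Pi|^2)$ gives
\be
|\nabla u|^2 K_{\Sigma_t} = \tfrac12 R_g|\nabla u|^2 + \tfrac12|\nabla u|^2\left(H^2 - |\Pi|^2\right) - \tfrac12\Delta|\nabla u|^2 + |\Hess u|^2 .
\ee
Inserting the exact splitting $|\Hess u|^2 = H^2|\nabla u|^2 + 2|\nabla^{\Sigma_t}|\nabla u||^2 + |\nabla u|^2|\Pi|^2$ and the relation $\partial_\nu|\nabla u| = -H|\nabla u|$, integrating over $\{0<u<t\}$ by the coarea formula, and differentiating in $t$, I would convert this into an evolution equation for a Willmore-type functional of $\Sigma_t$. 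Its driving terms are the three manifestly nonnegative integrals $\frac12\int_{\Sigma_t}R_g|\nabla u|$, $\int_{\Sigma_t}(|\Pi|^2 - \frac12 H^2)|\nabla u|$ and $\int_{\Sigma_t}\frac{|\nabla^{\Sigma_t}|\nabla u||^2}{|\nabla u|}$ — detecting the scalar curvature, the trace-free part of the second fundamental form, and the tangential variation of $|\nabla u|$, all of which vanish identically on the Schwarzschild model — together with a weighted Gauss-curvature term $\int_{\Sigma_t}|\nabla u|\,K_{\Sigma_t}$.

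The weighted curvature term is the main obstacle. Since $|\nabla u|$ is not constant along $\Sigma_t$ and $K_{\Sigma_t}$ is not signed, Gauss–Bonnet cannot be applied directly; this is exactly the delicate point of the Stern / Bray–Kazaras–Khuri method. I would treat it by an integration by parts pairing $\int_{\Sigma_t}|\nabla u|K_{\Sigma_t}$ against the retained tangential-gradient term, reducing it to the unweighted integral $\int_{\Sigma_t}K_{\Sigma_t}$. The topological hypotheses enter here in an essential way: $\Sigma$ connected together with $H_2(M,\Sigma) = 0$ forces each regular level set to be connected and homologous to $\Sigma$, so that $\int_{\Sigma_t}K_{\Sigma_t} = 2\pi\chi(\Sigma_t) \le 4\pi$. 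The critical values of $u$ form a set of measure zero and, by the branch-point bookkeeping in Stern's argument, contribute only with the favorable sign, so monotonicity persists across them.

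With the curvature term controlled and the nonnegative terms discarded, the evolution inequality integrates from $t=0$ to $t \to 1$. The substantive inequality is \eqref{eq-gdu-mc}: here the analysis produces, through $R_g \ge 0$, a lower bound on $\int_{\Sigma_t}H|\nabla u|$ whose integrated form compares the boundary value $1 - (\frac{1}{4\pi}\int_\Sigma|\nabla u|^2)^{1/2}$ to the limit $\frac{\m}{2\c_{_\Sigma}}$ computed above — this is the positive-mass content of the statement. Inequality \eqref{eq-gdu-H} is the companion: one checks that $\frac{2}{1-t}(\frac{1}{4\pi}\int_{\Sigma_t}|\nabla u|^2)^{1/2} - (\frac{1}{16\pi}\int_{\Sigma_t}H^2)^{1/2}$ tends to $1$ as $t\to1$ and is nondecreasing, whence at $t=0$ its value $(\frac{1}{\pi}\int_\Sigma|\nabla u|^2)^{1/2} - (\frac{1}{16\pi}\int_\Sigma H^2)^{1/2}$ is at most $1$. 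For rigidity, equality in either inequality forces all three nonnegative terms to vanish: $R_g \equiv 0$, every $\Sigma_t$ totally umbilic with $|\nabla u|$ constant on it and $\int_{\Sigma_t}K_{\Sigma_t} = 4\pi$. This makes $g$ rotationally symmetric, and integrating the resulting ODE with $R_g = 0$ identifies $(M,g)$ with the Schwarzschild exterior; the extra sign requirement on $H$ in the equality case of \eqref{eq-gdu-H} reflects that this inequality only involves $\int_\Sigma H^2$, hence $|H|$, whereas \eqref{eq-gdu-mc} does not involve $H$ at all.
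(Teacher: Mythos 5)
The paper itself offers no proof of this theorem (it is quoted from \cite{M22}), but your reconstruction is correct in outline and follows essentially the same route as \cite{M22}: level sets of the capacitary function $u$, with connectedness of regular level sets supplied precisely by $H_2(M,\Sigma)=0$ (the paper notes this is the hypothesis's only role), a Bochner/Gauss--Bonnet monotonicity along $\Sigma_t=u^{-1}(t)$ (the paper explicitly invokes the monotone quantity $\mathcal{B}(t)$ of \cite[Theorem 3.2(ii)]{M22}, of which your candidate quantities are variants --- e.g.\ your quantity for \eqref{eq-gdu-H} is indeed constant on the Schwarzschild model), and identification of the $t\to 1$ limits with $\m$ and $\c_{_\Sigma}$. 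The one bookkeeping caution: carrying $\left(\int_{\Sigma_t}H^2\right)^{1/2}$ inside the evolving quantity is less robust across critical values and sign changes of $H$ than evolving the smoother flux-type integrals $\int_{\Sigma_t}|\nabla u|^2$ and $\int_{\Sigma_t}H|\nabla u|$ and invoking Cauchy--Schwarz only at $t=0$, which is the cleaner implementation of the same idea.
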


A corollary of \eqref{eq-gdu-H} (see \cite[Theorem 7.2]{M22}) is 
an upper bound on the capacity-to-area-radius ratio,
first derived by Bray and the author \cite{BM08}.

\begin{thm} [\cite{BM08}] \label{thm-BM}
Let $(M, g)$ be a complete, orientable, asymptotically flat $3$-manifold with one end, with boundary $\Sigma$.
Suppose $ \Sigma$ is connected and $H_2 (M, \Sigma) = 0$. 
If $g$ has nonnegative scalar curvature, then
\be \label{eq-Bray-Miao-06} 
 \frac{  2  \c_{_\Sigma} } {  r_{_\Sigma}  }  \le  \left( \frac{1}{16\pi}  \int_\Sigma   H^2 \right)^\frac12  + 1 .
\ee
Here $\c_{_\Sigma}$ is the capacity of $\Sigma$ in $(M, g)$ and 
$ r_{_\Sigma} = \left( \frac{ | \Sigma | }{4 \pi}  \right)^\frac12 $ is the area-radius of $\Sigma$.
Moreover, equality holds if and only if 
$(M, g)$ is isometric to a spatial Schwarzschild manifold outside a rotationally symmetric sphere 
with nonnegative mean curvature. 
\end{thm}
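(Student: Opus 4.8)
The plan is to deduce \eqref{eq-Bray-Miao-06} directly from inequality \eqref{eq-gdu-H} by a single application of the Cauchy--Schwarz inequality on $\Sigma$. The bridge between the two statements is the elementary observation that the harmonic function $u$ appearing in \eqref{eq-gdu-H} (with $u = 0$ on $\Sigma$ and $u \to 1$ near $\infty$) and the capacity potential $\phi$ (with $\phi = 1$ on $\Sigma$ and $\phi \to 0$ at $\infty$) are related by $u = 1 - \phi$. Consequently $ |\nabla u| = |\nabla \phi|$ pointwise on $M$, and since $\Sigma$ is a level set of both functions, $\nabla u$ is normal to $\Sigma$ there. In particular the boundary integral satisfies
\[
 \int_\Sigma |\nabla u| = \int_\Sigma |\nabla \phi| = 4 \pi \, \c_{_\Sigma} .
\]

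With this identity in hand, the main computational step is to apply Cauchy--Schwarz on $\Sigma$,
\[
 \left( \int_\Sigma |\nabla u| \right)^2 \le | \Sigma | \int_\Sigma |\nabla u|^2 ,
\]
and substitute $\int_\Sigma |\nabla u| = 4\pi \c_{_\Sigma}$ together with $| \Sigma | = 4 \pi r_{_\Sigma}^2$. Rearranging gives
\[
 \frac{1}{\pi} \int_\Sigma |\nabla u|^2 \ge \frac{4 \, \c_{_\Sigma}^2}{ r_{_\Sigma}^2 } = \left( \frac{ 2 \, \c_{_\Sigma} }{ r_{_\Sigma} } \right)^2 .
\]
Taking square roots and chaining with \eqref{eq-gdu-H} then yields
\[
 \frac{ 2 \, \c_{_\Sigma} }{ r_{_\Sigma} } \le \left( \frac{1}{\pi} \int_\Sigma |\nabla u|^2 \right)^\frac12 \le \left( \frac{1}{16 \pi} \int_\Sigma H^2 \right)^\frac12 + 1 ,
\]
which is precisely \eqref{eq-Bray-Miao-06}.

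For the equality statement, I would argue that no additional geometric constraint is introduced beyond what \eqref{eq-gdu-H} already imposes. If equality holds in \eqref{eq-Bray-Miao-06}, then both inequalities in the chain above must be equalities; in particular equality holds in \eqref{eq-gdu-H}, which by the cited theorem forces $(M,g)$ to be isometric to a spatial Schwarzschild manifold outside a rotationally symmetric sphere of nonnegative mean curvature. Conversely, on such a model the rotational symmetry makes $|\nabla u|$ constant along $\Sigma$, so the Cauchy--Schwarz step is automatically an equality, while \eqref{eq-gdu-H} holds with equality by the hypothesis; hence \eqref{eq-Bray-Miao-06} is an equality. The characterization of equality in \eqref{eq-Bray-Miao-06} therefore coincides with that of \eqref{eq-gdu-H}. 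I do not anticipate a serious obstacle here, as \eqref{eq-gdu-H} does all of the analytic work; the only point requiring care is verifying that the Cauchy--Schwarz step contributes no extra rigidity, which the rotational symmetry of the Schwarzschild model settles.
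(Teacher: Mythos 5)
Your derivation is correct and is exactly the route the paper indicates: Theorem \ref{thm-BM} is presented as a corollary of \eqref{eq-gdu-H}, obtained by combining $\int_\Sigma |\nabla u| = 4\pi \c_{_\Sigma}$ with Cauchy--Schwarz on $\Sigma$, as in Theorem 7.2 of \cite{M22}. Your handling of the equality case, including the observation that the Cauchy--Schwarz step adds no extra rigidity because $|\nabla u|$ is constant on $\Sigma$ in the rotationally symmetric model, is also sound.
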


In this paper, we give some other applications of \eqref{eq-main-m-c-ratio}, \eqref{eq-gdu-H} and \eqref{eq-gdu-mc}.

\vspace{.2cm}

First, for later purposes, we remark on the topological assumption ``$H_2 (M, \Sigma) = 0 $" in
Theorems \ref{thm-intro-1-1} -- \ref{thm-BM} above:  
the assumption is imposed only to ensure each regular level set of the harmonic function $u$, vanishing at the boundary
and tending to $1$ near $\infty$, to be connected in the interior of $M$ (see the paragraph preceding the 
proof of Theorem 3.1 in \cite{M22}); 
indeed, 
\eqref{eq-main-m-c-ratio}, \eqref{eq-gdu-H} and \eqref{eq-gdu-mc} (and all other results from \cite{M22}) hold 
if ``$H_2 (M, \Sigma) = 0 $" is replaced by assuming   

\begin{itemize}
\item[($\ast$)] each closed, connected, orientable surface in the interior of $M$ either is the boundary of a bounded domain, or 
 together with $\Sigma$ forms the boundary of a bounded domain. 
\end{itemize}

Now we motivate the main tasks in this paper. 
Let us first return  to the setting of \eqref{eq-basic-app}, 
in which the surface  $S_r$ ``closes up nicely" (to bound a geodesic ball).
In this setting,  by a result of Mondino and Templeton-Browne \cite{MT22}, 
$\{ S_r \}$ can be perturbed to yield another family of surfaces $\{ \Sigma_r \}$ so that, as $ r \to 0$,  
\be
 \int_{\Sigma_r} H^2 
 =  16 \pi - \frac{ 8 \pi}{3} R(p) r^2 + \frac{4\pi}{3}\left[ \frac19 R(p)^2 - \frac{4 }{15} | \mathring{\Ric} (p) |^2  - \frac{1}{5} \Delta R (p) \right] r^4 + O (r^5) .
\ee
Here $ R$ denotes the scalar curvature and $ \mathring{\Ric} = \Ric - \frac13 R g $ is the traceless part of $ \Ric$, the Ricci tensor. 
Applying \eqref{eq-main-m-c-ratio} to the exterior of these $ \Sigma_r $ in $(M,g)$, one obtains 
\be \label{eq-m-c-ratio-small-p-ball}
 \frac{ \m} {  \c_{_{\Sigma_r} } }   \ge   
  \frac{1}{12} R(p) r^2  + \left[  \frac{1}{90} | \mathring{ \Ric } (p) |^2 - \frac{1}{864} R(p)^2   + \frac{1}{120} \Delta R (p) \right]  r^4 + O (r^5) .
\ee
If $ R \ge 0 $, \eqref{eq-m-c-ratio-small-p-ball} shows the inequality $ \m \ge 0 $ as well as the rigidity of $\m = 0 $. 

\vspace{.2cm}

In general,  \eqref{eq-main-m-c-ratio} suggests that, 
if it is applied to obtain $\m \ge 0 $ on an $(M,g)$, 
the manifold boundary $\Sigma$ does not need to admit a ``nice fill-in". 
Rewriting \eqref{eq-main-m-c-ratio} as
\bee
 \m \ge  \c_{_\Sigma} \left[ 1  -  \left( \frac{1}{16\pi}  \int_{S_r}   H^2 \right)^\frac12  \right] ,
\eee
one may seek conditions on metrics $g$ with a ``singularity" 
so that $\m \ge 0$ while $g$ is allowed to be incomplete.

Similarly, on an $(M,g)$ with two ends, one of which is asymptotically flat (AF), 
assuming it admits a harmonic function $u$ that tends to $1$ at the AF end and tends to $0$ at the other 
end, one may aim to apply \eqref{eq-gdu-mc}, i.e.  
\bee 
\m \ge  2  \c_{_\Sigma} \left[ 1 - \left( \frac{1}{4 \pi} \int_{\Sigma} | \nabla u |^2 \right)^\frac12 \right] , 
\eee
to bound $\m$ via the energy of $u$ on the entire $(M,g)$. 

Below we formulate a class of manifolds to carry out the above mentioned tasks. 
Throughout the paper, let $N$ be a noncompact,  connected, orientable $3$-manifold. 
We assume $N$ admits an increasing exhaustion sequence of bounded domains with connected boundary.
Precisely, this means there exists a sequence of closed, orientable surfaces $\{ \Sigma_k \}_{k=1}^\infty$
in $N $ such that 
\begin{itemize}
\item $ \Sigma_k$ is connected; 
\item $ \Sigma_k = \p D_k $ for a precompact domain $D_k \subset N$; and 
\item $ \bar D_k  \subset D_{k+1}  $  and $ N  = \cup_{k=1}^\infty D_k$. Here $\bar D_k = D_k \cup \Sigma_k $ 
is the closure of $D_k$ in $N$. 
\end{itemize}

Fix a point $ p \in N$,  let $ M = N \setminus \{p \}$. 
On $M$, let $g$ be a smooth metric that is asymptotically flat near $p$.  
We refer $p$ as the asymptotically flat (AF) $\infty$ of $(M,g)$. 
Unless otherwise specified, we do not impose assumptions on the behavior of $g$ near $ \Sigma_k $
as $ k \to \infty$. 
In particular, $(M,g)$ does not need to be complete,

Given any closed, connected surface $S \subset M$,
we say $S$ encloses $p$ if    $S = \p D_{_S} $ 
for some precompact domain $D_{_S} \subset N$
such that $ p \in D_{_S}$. 
Let $\mathcal{S}$ denote  the set of all such surfaces $S \subset M$ enclosing $p$. 
Clearly, $ \Sigma_k \in \mathcal{S}$ for large $k$.
Define
\be \label{eq-inf-c} 
\c (M,g) = \inf_{ S \in \mathcal{S}} \, \c_{_S} . 
\ee
Here $ \c_{_S}$ is the capacity of $ S $ in the asymptotically flat $(E_{_S}, g)$, 
where 
$$E_{_S} =  (  D_{_S} \setminus \{p\} ) \cup S . $$ 

As a functional on $\mathcal{S}$, 
the capacity $\c_{_S}$ has a monotone property, that is if $S_1, S_2 \in \mathcal{S}$ and $D_{_{S_1} } \subset D_{_{S_2}}$,
then $ \c_{_{S_1}} \ge \c_{_{S_2}}$. 
Such a property readily implies $\{ \c_{_{\Sigma_k}} \}$ is monotone non-increasing and 
\be 
\c (M,g) = \lim_{ k \to \infty}  \c_{_{\Sigma_k}} . 
\ee
Standard arguments show $\c (M, g) > 0 $ if and only if there exists 
a harmonic function $ w $ on $(M,g)$ such that $ 0 < w < 1 $ on $M$ and $ w (x) \to 1$ at $\infty$ (i.e. as $x \to p$).  
(See Proposition \ref{prop-c-m-positive} in Section \ref{sec-pmt-cm-positive}.) 

\begin{figure}[h]
\includegraphics[scale=0.32]{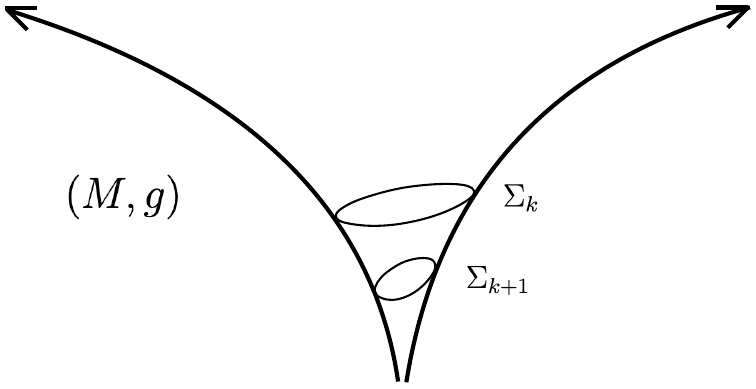}
 \ \ \hspace{2cm}
 \includegraphics[scale=0.3]{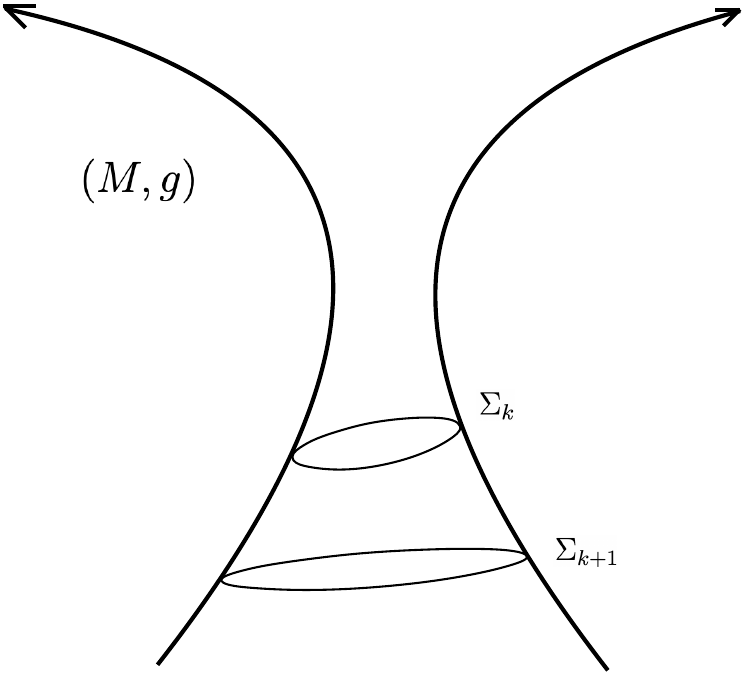}
\caption{On the left is an examples of $(M,g)$ with $\c (M,g) = 0$; the arrow denotes the AF end;
   $\{ \Sigma_k \}$ may approach a ``singularity" as $k \to \infty$.
   On the right is an example of $(M,g)$ with $\c (M,g) > 0 $; besides the AF end, $(M,g)$ 
  has another end with suitable growth.}
\end{figure}

\vspace{.2cm}

For manifolds $(M,g)$ with $\c(M,g) = 0 $, 
we seek conditions that imply the AF end of $(M,g)$ has mass $\m \ge 0 $, see Theorem \ref{thm-pmt-cw} 
and Remark \ref{rem-pmt-cone-horn}.
For $(M,g)$ with $ \c (M,g) > 0 $, we explore for sufficient conditions that bound  $\m $ from below via  $ \c (M,g)$, 
see Theorem \ref{thm-c-m-u-limit} and Corollary \ref{thm-m-c-Ric}.

\section{Singular metrics with $ \m \ge 0$}  \label{sec-pmt-cm-zero}

Let $N$, $M$ and $g$ be given in the definition of $\c (M,g)$ in \eqref{eq-inf-c}. 
Given $ S \in \mathcal{S}$,  let  
$$ W(S) = \int_S   H^2 . $$ 
We want to apply \eqref{eq-main-m-c-ratio} to $(E_{_S}, g)$. 
For this purpose, we assume the background manifold $N$ satisfies $H_2 (N) = 0$. 
Under this assumption,  any closed, connected surface $S' $ in $M = N \setminus \{ p \}$
is the boundary of a bounded domain $D \subset N$. If $ p \not \in D$, then $D \subset M$;
if $ p \in D$, then $ S'$ is homologous to $S \in \mathcal{S}$. Therefore, condition $(\ast)$ holds on $E_{_S}$.

The following is a direct corollary of \eqref{eq-main-m-c-ratio}.

\begin{prop} \label{prop-pmt-c-w}
Suppose $H_2 (N) = 0 $ and $(M, g)$ has nonnegative scalar curvature.  
Then
\be \label{eq-c-w-pmt}
 \c_{_{S_k}} W(S_k)^\frac12 \to 0  \ \text{along a sequence } \{ S_k \} \subset \mathcal{S} \ \ 
\Rightarrow \  \m \ge 0 . 
\ee
\end{prop}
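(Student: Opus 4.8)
The plan is to apply the mass-capacity inequality \eqref{eq-main-m-c-ratio} to each truncated manifold $(E_{_{S_k}}, g)$ separately and then pass to the limit. First I would check that, for each $k$, the manifold $(E_{_{S_k}}, g)$ satisfies the hypotheses of Theorem \ref{thm-intro-1-1}. Indeed, since $E_{_{S_k}} = (D_{_{S_k}} \setminus \{p\}) \cup S_k$ is the closure $\overline{D}_{_{S_k}}$ with the single point $p$ removed, it is a complete, orientable, asymptotically flat $3$-manifold with exactly one end (the AF $\infty$ at $p$) and connected boundary $S_k$; it has nonnegative scalar curvature because $E_{_{S_k}} \subset M$; and, as observed in the paragraph preceding the proposition, the assumption $H_2(N) = 0$ forces condition $(\ast)$ to hold on $E_{_{S_k}}$, which is precisely what \eqref{eq-main-m-c-ratio} requires in place of ``$H_2(M, \Sigma) = 0$''.

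The decisive observation is that every $E_{_{S_k}}$ shares one and the same asymptotically flat end, namely the end at $p$; consequently the ADM mass entering \eqref{eq-main-m-c-ratio} is the fixed number $\m$ for all $k$, not a $k$-dependent quantity. Applying \eqref{eq-main-m-c-ratio} on $(E_{_{S_k}}, g)$ with boundary $S_k$ thus yields
\bee
\frac{\m}{\c_{_{S_k}}} \ge 1 - \left( \frac{1}{16\pi} W(S_k) \right)^\frac12 ,
\eee
and, since $\c_{_{S_k}} > 0$, I would multiply through and rewrite this as
\bee
\m \ge \c_{_{S_k}} - \frac{1}{4\sqrt{\pi}}\, \c_{_{S_k}}\, W(S_k)^\frac12 .
\eee

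Finally, using $\c_{_{S_k}} \ge 0$ to discard the first term on the right, I obtain the bound $\m \ge - \frac{1}{4\sqrt{\pi}}\, \c_{_{S_k}}\, W(S_k)^\frac12$ for every $k$. Letting $k \to \infty$ and invoking the hypothesis $\c_{_{S_k}} W(S_k)^\frac12 \to 0$ sends the right-hand side to $0$, so $\m \ge 0$, as claimed. Because the estimate is derived term by term and the conclusion follows from a single limit, there is no genuine obstacle here; the only points needing care are confirming that Theorem \ref{thm-intro-1-1} really does apply to each $E_{_{S_k}}$ (completeness together with condition $(\ast)$, both secured by the standing assumptions) and recognizing that the mass in each of the resulting inequalities is the common value $\m$. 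This explains why the proposition is stated as a direct corollary of \eqref{eq-main-m-c-ratio}.
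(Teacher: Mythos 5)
Your proof is correct and takes essentially the same approach as the paper: apply \eqref{eq-main-m-c-ratio} to each exterior region $(E_{_{S_k}}, g)$ (with condition $(\ast)$ supplied by $H_2(N)=0$, and the mass being the common value $\m$ of the shared AF end at $p$), then let $k \to \infty$. The only cosmetic difference is that you discard the nonnegative term $\c_{_{S_k}}$ to get $\m \ge -\frac{1}{4\sqrt{\pi}}\, \c_{_{S_k}} W(S_k)^{\frac12}$ directly, whereas the paper argues by cases (either $W(S_k) \le 16\pi$ for some $k$, giving $\m \ge 0$ at once, or $W(S_k) > 16\pi$ for all $k$, forcing $\c_{_{S_k}} \to 0$); your bookkeeping is marginally cleaner but the argument is the same.
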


\begin{proof}
If $ W(S_k) \le 16 \pi $ for some $k$, then \eqref{eq-main-m-c-ratio} implies $ \m \ge 0 $.

Suppose $  W(S_k) > 16 \pi $ for every $k$, then 
``$ \c_{_{S_k}} W(S_k)^\frac12 \to 0  $" implies 
``$  \c_{_{S_k}}  \to 0 $". 
Rewriting  \eqref{eq-main-m-c-ratio} as 
\be \label{eq-main-m-c-ratio-app}
\m \ge  \c_{_{S_k} } \left[ 1  -  \left( \frac{1}{16\pi}  W(S_k)  \right)^\frac12 \right] 
\ee
and letting $k \to \infty$, we have $\m \ge 0$.
\end{proof} 

Given $ S \in \mathcal{S}$, 
let  $ \displaystyle  \m_{_H} (S) = \frac{r_{_S} }{2} \left( 1 -  \frac{1}{16\pi} W (S) \right)  $ 
denote the Hawking mass of $S$ (\cite{Hawking68}).
Inequality \eqref{eq-m-c-m-mh} in the next Proposition 
is comparable to the result of 
Huisken and Ilmanen \cite{HI01} on the relation between $\m$ and $ \m_{_H} (S)$.

\begin{prop}
Suppose $H_2 (N) = 0 $ and $(M, g)$ has nonnegative scalar curvature.
If  a surface $S \in \mathcal{S} $ satisfies $ W(S) \ge 16 \pi $,  then
\be \label{eq-m-c-m-mh}
\m \ge 
 \c_{_S}  \left[ 1 -   \left( \frac{1}{16\pi}  W(S)  \right)^\frac12  \right]   \ge   \m_{_H} (S).
\ee
\end{prop}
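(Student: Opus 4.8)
The plan is to obtain both inequalities from results already in hand. For the first, since $H_2(N) = 0$ guarantees that condition $(\ast)$ holds on the asymptotically flat manifold $(E_{_S}, g)$, I would apply \eqref{eq-main-m-c-ratio} of Theorem \ref{thm-intro-1-1} directly to $(E_{_S}, g)$ and multiply through by $\c_{_S} > 0$, obtaining
$$ \m \ge \c_{_S} \left[ 1 - \left( \frac{1}{16\pi} W(S) \right)^\frac12 \right] . $$

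For the second inequality, I would abbreviate $ a = \left( \frac{1}{16\pi} W(S) \right)^\frac12 $, so that the hypothesis $W(S) \ge 16\pi$ reads $a \ge 1$. Writing the Hawking mass in factored form,
$$ \m_{_H}(S) = \frac{r_{_S}}{2} \left( 1 - \frac{1}{16\pi} W(S) \right) = \frac{r_{_S}}{2} (1 - a)(1 + a) , $$
the claim $ \c_{_S}(1-a) \ge \m_{_H}(S) $ becomes $ \c_{_S}(1-a) \ge \frac{r_{_S}}{2}(1-a)(1+a) $. The key observation is that $1 - a \le 0$. When $a = 1$ both sides vanish; when $a > 1$, dividing by the negative quantity $1 - a$ reverses the inequality, so the claim is equivalent to
$$ \c_{_S} \le \frac{r_{_S}}{2}(1 + a) = \frac{r_{_S}}{2}\left[ 1 + \left( \frac{1}{16\pi} W(S) \right)^\frac12 \right] , $$
which is precisely the Bray--Miao inequality \eqref{eq-Bray-Miao-06} of Theorem \ref{thm-BM} applied to $S$ in $(E_{_S}, g)$.

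The only point requiring care is the sign of the factor $1 - a$: it is exactly the standing hypothesis $W(S) \ge 16\pi$ that forces $1 - a \le 0$, so that the comparison with the Hawking mass is realized through an inequality reversal and a reduction to \eqref{eq-Bray-Miao-06}, rather than through a direct estimate. I do not expect any genuine obstacle here, since both building blocks---Theorems \ref{thm-intro-1-1} and \ref{thm-BM}---are already available and both apply to $(E_{_S}, g)$ under the hypotheses $H_2(N) = 0$ and $R \ge 0$; the entire argument is therefore a short algebraic manipulation splicing these two inequalities together.
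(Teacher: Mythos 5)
Your proposal is correct and is essentially the paper's own argument: the first inequality is \eqref{eq-main-m-c-ratio} applied to $(E_{_S}, g)$ (justified by condition $(\ast)$ under $H_2(N)=0$), and the second is the Bray--Miao bound \eqref{eq-Bray-Miao-06} multiplied by the nonpositive factor $1 - \left( \frac{1}{16\pi} W(S) \right)^{\frac12}$, which reverses the inequality and, after factoring $1 - \frac{1}{16\pi}W(S)$, yields exactly the comparison with $\m_{_H}(S)$ that the paper records in \eqref{eq-Bray-Miao-06-app}.
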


\begin{proof}
If $ W(S) \ge 16 \pi $, then \eqref{eq-Bray-Miao-06} implies 
\be \label{eq-Bray-Miao-06-app} 
\begin{split}
 \c_{_S}  \left[ 1 -   \left( \frac{1}{16\pi} \int_S   H^2 \right)^\frac12  \right]   \ge & \
 \frac{r_{_S} }{2} \left[ 1 -  \frac{1}{16\pi}  \int_S   H^2  \right] 
 = \m_{_H} (S) .
\end{split}
\ee
This combined with \eqref{eq-main-m-c-ratio} proves  \eqref{eq-m-c-m-mh}.
\end{proof}

\begin{rem}
Similar to \eqref{eq-c-w-pmt}, a condition of ``$ r_{_{S_k}} W(S_k) \to 0$" along $\{ S_k \} \subset \mathcal{S} $ also implies ``$ \m \ge 0 $". 
However, if $ \inf_{k} W(S_k) \ge 16 \pi  $,  then  
$$ ``r_{_{S_k}} W(S_k) \to 0" \ \Rightarrow  \  ``r_{_{S_k}} \to 0 \ \text{and} \   r_{_{S_k}} W(S_k)^\frac12 \to 0" \ 
\Rightarrow  \ ``\c_{_{S_k}} \to 0" , $$
where the last step is by \eqref{eq-Bray-Miao-06}.
Combined with \eqref{eq-Bray-Miao-06-app}, this implies  
the assumption of ``$ \c_{_{S_k}} W(S_k)^\frac12 \to 0  $" in Proposition \ref{prop-pmt-c-w}. 
\end{rem}

\vspace{.1cm}

In what follows, let $ \{ \Sigma_k \} \subset \mathcal{S} $ be the sequence of surfaces given in the introduction.  
The numerical value of $ \c_{_{\Sigma_k}}$ depends on $g$ near the AF end. However, a property of 
``$ \c_{_{\Sigma_k}}  \to 0  $" does not. This was shown by Bray and Jauregui \cite{BJ13} 
in the context of $(M,g)$ having a zero area singularity. 
Their argument 
applies to ``$ \c_{_{\Sigma_k}} W(\Sigma_k)^\frac12 \to 0  $". To illustrate this, it is convenient to 
adopt a notion of relative capacity (see \cite{J21} for instance).  
Given two surfaces $S, \tilde S \in \mathcal{S}$, suppose $S \cap \tilde S = \emptyset$ and 
$ D_{_{\tilde S}} \subset D_{_S}$. The capacity of $S$ relative to $\tilde S$ is 
\be
\c_{_{(S, \tilde S)} }= \frac{1}{4\pi} \int_{D_{_{S}}  \setminus D_{_{\tilde S } } } | \nabla v |^2 ,
\ee
where $v$ is the harmonic function on $D_{_{S}}  \setminus D_{_{\tilde S } }$ with $ v = 0 $ at $ S$ and $ v = 1 $ at $ \tilde S$. 

\begin{prop} \label{prop-c-w-localized}
Let $\tilde S \in \mathcal{S}$ be a fixed surface. Then, as $k \to \infty$, 
$$ \c_{_{\Sigma_k}} W(\Sigma_k)^\frac12 \to 0  \ \Longleftrightarrow \  \c_{_{(\Sigma_k, \tilde S) }} W(\Sigma_k)^\frac12 \to 0  . $$
\end{prop}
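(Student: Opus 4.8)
The plan is to reduce the stated equivalence to a two-sided comparison between the two capacities along the exhausting sequence. Since the factor $W(\Sigma_k)^\frac12$ is common to both quantities, everything follows once I establish a sandwich
\bee
\c_{_{\Sigma_k}} \le \c_{_{(\Sigma_k, \tilde{S})}} \le C \, \c_{_{\Sigma_k}}
\eee
for all large $k$, with $C$ independent of $k$. Throughout, let $\phi_k$ be the harmonic function on $E_{_{\Sigma_k}}$ with $\phi_k = 1$ on $\Sigma_k$ and $\phi_k \to 0$ at $p$, so that $4\pi \, \c_{_{\Sigma_k}} = \int_{E_{_{\Sigma_k}}} |\nabla \phi_k|^2$, let $v_k$ be the relative potential with $v_k = 1$ on $\tilde{S}$ and $v_k = 0$ on $\Sigma_k$, and let $A_k = D_{_{\Sigma_k}} \setminus D_{_{\tilde{S}}}$ be the annular region, which is defined as soon as $\bar{D}_{_{\tilde{S}}} \subset D_{_{\Sigma_k}}$.

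The lower bound is elementary. Extending $v_k$ by the constant $1$ across $D_{_{\tilde{S}}}$ produces a Lipschitz competitor for the capacity of $\Sigma_k$ which equals $1$ on $\Sigma_k$, vanishes near $p$, and has Dirichlet energy exactly $\int_{A_k} |\nabla v_k|^2 = 4\pi \, \c_{_{(\Sigma_k, \tilde{S})}}$; hence $\c_{_{\Sigma_k}} \le \c_{_{(\Sigma_k, \tilde{S})}}$. This also gives immediately the implication from right to left in the statement.

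The upper bound, which I expect to be the only delicate point, rests on separating $\phi_k$ uniformly from $1$ on the fixed surface $\tilde{S}$, namely on producing a constant $\theta < 1$, independent of $k$, with $\max_{\tilde{S}} \phi_k \le \theta$. I would obtain this by a barrier comparison with a fixed intermediate surface. Choose $\hat{S} \in \mathcal{S}$ with $\bar{D}_{_{\tilde{S}}} \subset D_{_{\hat{S}}}$ (for instance $\hat{S} = \Sigma_{k_0}$ for a large fixed $k_0$), and let $\psi$ be the capacity potential of $\hat{S}$. Then $\psi$ is a fixed function with $0 < \psi < 1$ in the interior, so the strong maximum principle gives $\theta := \max_{\tilde{S}} \psi < 1$. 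For every $k$ with $D_{_{\hat{S}}} \subset D_{_{\Sigma_k}}$, both $\phi_k$ and $\psi$ are harmonic on $D_{_{\hat{S}}} \setminus \{p\}$, both tend to $0$ at $p$, and $\phi_k \le 1 = \psi$ on $\hat{S}$. Applying the maximum principle on $D_{_{\hat{S}}}$ with the AF end at $p$ exhausted by coordinate spheres, and using that both functions vanish at $p$, forces $\phi_k \le \psi$ throughout; in particular $\max_{\tilde{S}} \phi_k \le \theta$.

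With this separation in hand, the upper estimate is one line. Using the relaxed characterization of $\c_{_{(\Sigma_k, \tilde{S})}}$, in which one minimizes Dirichlet energy over functions that are $\ge 1$ on $\tilde{S}$ and $\le 0$ on $\Sigma_k$, the function $f = (1 - \phi_k)/(1 - \theta)$ is an admissible competitor on $A_k$: it vanishes on $\Sigma_k$, and it is $\ge 1$ on $\tilde{S}$ precisely because $\phi_k \le \theta$ there. Its energy is $(1-\theta)^{-2} \int_{A_k} |\nabla \phi_k|^2 \le (1-\theta)^{-2} \cdot 4\pi \, \c_{_{\Sigma_k}}$, whence $\c_{_{(\Sigma_k, \tilde{S})}} \le (1-\theta)^{-2} \, \c_{_{\Sigma_k}}$. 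Setting $C = (1-\theta)^{-2}$ closes the sandwich, and multiplying through by $W(\Sigma_k)^\frac12$ yields the asserted equivalence in both directions.
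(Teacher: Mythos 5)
Your proof is correct, and it establishes exactly the two-sided comparison $\c_{_{\Sigma_k}} \le \c_{_{(\Sigma_k, \tilde S)}} \le C\, \c_{_{\Sigma_k}}$ that drives the paper's argument, but by a variational route where the paper argues potential-theoretically. The paper compares the harmonic potentials directly on the annulus $D_k \setminus D_{_{\tilde S}}$: with $u_k$ the capacity potential ($u_k = 0$ on $\Sigma_k$, $u_k \to 1$ at $\infty$) and $v_k$ the relative potential, the maximum principle gives $v_k \ge u_k \ge \beta_k v_k$ with $\beta_k = \min_{\tilde S} u_k$; since all three vanish on $\Sigma_k$, this yields $\partial_\nu v_k \ge \partial_\nu u_k \ge \beta_k \partial_\nu v_k$ there, and integrating the flux identities $4\pi \c_{_{\Sigma_k}} = \int_{\Sigma_k} \partial_\nu u_k$ and $4\pi \c_{_{(\Sigma_k, \tilde S)}} = \int_{\Sigma_k} \partial_\nu v_k$ gives the sandwich with constant $\beta_k^{-1}$. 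Your uniform separation $\max_{\tilde S} \phi_k \le \theta < 1$ is precisely the paper's uniform positive lower bound on $\beta_k$ (note $\beta_k = 1 - \max_{\tilde S} \phi_k$, since $\phi_k = 1 - u_k$), which the paper asserts without detail (it follows, e.g., from the monotonicity of $u_k$ in $k$) and which you prove cleanly via a fixed barrier $\psi$ --- equivalent maximum-principle arguments. Your test-function implementation of the two capacity inequalities (shorting the condenser across $D_{_{\tilde S}}$ for the lower bound; rescaling $1 - \phi_k$ for the upper bound) avoids the Hopf-type normal-derivative comparison and the flux computations entirely, which is arguably more robust, at the harmless cost of the constant $(1-\theta)^{-2}$ in place of the paper's $\beta_k^{-1} \le (1-\theta)^{-1}$. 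Two cosmetic points: in the lower bound you describe the extended competitor as equal to $1$ on $\Sigma_k$ and vanishing near $p$, whereas the extension of $v_k$ by the constant $1$ across $D_{_{\tilde S}}$ in fact vanishes on $\Sigma_k$ and is identically $1$ near $p$ (i.e., tends to $1$ at the AF infinity) --- which is exactly the admissibility required by the paper's definition of capacity, so nothing is lost, the boundary roles are merely swapped in the prose; and your appeal to the relaxed characterization of the relative capacity (competitors $\ge 1$ on $\tilde S$, $\le 0$ on $\Sigma_k$) is standard but merits the one-line justification that truncating a competitor to $[0,1]$ does not increase Dirichlet energy, or equivalently you may use the competitor $\min\left\{ (1-\phi_k)/(1-\theta),\, 1 \right\}$ directly.
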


\begin{proof}
For large $k$, let $u_k$, $v_k$ be the harmonic function on $D_{k} \setminus \{ p \}$,
$ D_{k} \setminus D_{_{\tilde S}}$, with boundary values $u_k = 0 $ at $\Sigma_k$, $u_k \to1$ at the AF $\infty$, 
$v_k = 0 $ at $\Sigma_k$, $v_k = 1 $ at $\tilde S$, respectively. 
Let $ \beta_k = \min_{\tilde S} u_k $. By the maximum principle, $ v_k \ge u_k \ge \beta_k v_k $ on 
$ D_{k} \setminus D_{_{\tilde S}}$, which implies 
$   \p_\nu v_k \ge  \p_\nu u_k \ge \beta_k \p_\nu v_k$ at $\Sigma_k$. 
Here $\nu $ denotes the unit normal to $\Sigma_k$ pointing to $\infty$. 
Since $ 4 \pi \c_{_{\Sigma_k}} =  \int_{\Sigma_k} \p_\nu u_k $ and 
$ 4 \pi \c_{_{(\Sigma_k, \tilde S)}} = \int_{\Sigma_k }  \p_\nu v_k $, one has
$$  \beta_k^{-1}  \c_{_{\Sigma_k}}  \ge \c_{_{(\Sigma_k, \tilde S )} }  \ge   \c_{_{\Sigma_k}}  .  $$
The claim follows by noting that $\beta_k$ has a uniform positive lower bound as $k \to \infty$.
\end{proof}

As an application of Propositions \ref{prop-pmt-c-w} and \ref{prop-c-w-localized}, we have 

\begin{thm} \label{thm-pmt-cw}
Let $N$ be a noncompact,  connected, orientable $3$-manifold.
Suppose $H_2 (N) = 0 $. Let $ M = N \setminus \{ p \}$ where $ p  $ is a point in $N$. 
Let $g$ be a smooth metric with nonnegative scalar curvature on $M$ such that  $g$ is  asymptotically flat near $p$. 
Assume there is a precompact domain $D \subset N$ such that $ p \in D$ and 
$( N \setminus D, g) $ is isometric to 
$$ ( (0, \delta] \times \Sigma, \bar g  + h ) ,  $$
where 
\begin{itemize}

\item  $\delta > 0$ is a constant, $ \Sigma$ is a closed, connected, orientable surface; 

\vspace{.1cm}

\item $ \bar g = d r^2 + a(r)^2 \sigma $, in which $ \sigma$ is a given metric on $\Sigma$ 
and $a(r) $ is a positive function on $(0, \delta]$; and 

\vspace{.1cm}

\item $ \lambda^{-1} \le | \bar g + h |_{\bar g} \le \lambda $ for some constant $ \lambda >0 $.
\end{itemize} 
Then 
\be \label{eq-pmt-c-w-a-r}
\lim_{r \to 0}  \left(  \int_r^\delta \frac{1}{a (x)^2}  \, d x \right)^{-1}  \left[  | a'(r ) |  +   a(r) | \bar \nabla h |_{\bar g}  \right] = 0 \ 
 \Longrightarrow \  \m \ge 0 . 
\ee
\end{thm}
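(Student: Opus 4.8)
The plan is to work inside the model collar $\big((0,\delta]\times\Sigma,\ \bar g+h\big)$, to exhaust the singular end of $M$ by the coordinate slices $\Sigma_r=\{r\}\times\Sigma$ as $r\to0$, and to verify the hypothesis ``$\c_{_{S_k}}W(S_k)^{\frac12}\to0$'' of Proposition \ref{prop-pmt-c-w} along such a sequence. Choosing $D_k=D\cup\big((r_k,\delta]\times\Sigma\big)$ for any $r_k\searrow0$ gives a legitimate exhaustion of $N$ with $\p D_k=\Sigma_{r_k}$, so $\Sigma_{r_k}\in\mathcal{S}$ for large $k$. Applying Proposition \ref{prop-c-w-localized} with the fixed surface $\tilde S=\{\delta\}\times\Sigma=\p D\in\mathcal{S}$, I would reduce matters to showing that the relative quantity $\c_{_{(\Sigma_r,\tilde S)}}\,W(\Sigma_r)^{\frac12}$ tends to $0$ as $r\to0$; the point of this reduction is that $\c_{_{(\Sigma_r,\tilde S)}}$ is computed entirely within the collar, where $g=\bar g+h$ is explicit.

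First I would bound the Willmore energy. For the warped product $\bar g=dr^2+a(r)^2\sigma$ each slice $\Sigma_r$ is totally umbilic with mean curvature $\bar H=2a'(r)/a(r)$, and its $\bar g$-area is $a(r)^2|\Sigma|_\sigma$, where $|\Sigma|_\sigma=\int_\Sigma d\sigma$. Since mean curvature involves only first derivatives of the metric, and since $\lambda^{-1}\le|\bar g+h|_{\bar g}\le\lambda$ forces $g$ and $\bar g$ to be uniformly comparable, the $g$-mean curvature $H$ of $\Sigma_r$ obeys a pointwise bound $|H|\le C\big(|a'(r)|/a(r)+|\bar\nabla h|_{\bar g}\big)$ while the $g$-area of $\Sigma_r$ stays comparable to $a(r)^2|\Sigma|_\sigma$, with $C=C(\lambda)$. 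Integrating $H^2$ then gives
\bee
W(\Sigma_r)^{\frac12}\le C\big(|a'(r)|+a(r)\,|\bar\nabla h|_{\bar g}\big),
\eee
which is exactly the bracketed factor in the hypothesis of the theorem.

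Next I would evaluate the relative capacity. The function $v=v(r)$ depending only on the collar coordinate $r$ that is $\bar g$-harmonic on $\big([r,\delta]\times\Sigma,\bar g\big)$ with $v=0$ on $\Sigma_r$ and $v=1$ on $\tilde S$ solves $\big(a^2v'\big)'=0$, and a direct computation yields the model value
\bee
\c^{\bar g}_{_{(\Sigma_r,\tilde S)}}=\frac{|\Sigma|_\sigma}{4\pi}\left(\int_r^\delta\frac{1}{a(x)^2}\,dx\right)^{-1}.
\eee
Using $v$ as a competitor for the $g$-capacity and invoking the comparability of $g$ and $\bar g$ once more bounds the true relative capacity by a constant multiple of the model one, $\c_{_{(\Sigma_r,\tilde S)}}\le C\big(\int_r^\delta a^{-2}\,dx\big)^{-1}$. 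Multiplying the two estimates gives
\bee
\c_{_{(\Sigma_r,\tilde S)}}\,W(\Sigma_r)^{\frac12}\le C\left(\int_r^\delta\frac{1}{a(x)^2}\,dx\right)^{-1}\big(|a'(r)|+a(r)\,|\bar\nabla h|_{\bar g}\big),
\eee
whose right-hand side tends to $0$ by hypothesis. Proposition \ref{prop-c-w-localized} then upgrades this to $\c_{_{\Sigma_{r_k}}}W(\Sigma_{r_k})^{\frac12}\to0$, and Proposition \ref{prop-pmt-c-w} delivers $\m\ge0$.

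The step I expect to demand the most care is the mean-curvature estimate: I must extract precisely the combination $|a'|/a+|\bar\nabla h|_{\bar g}$ and confirm that only first derivatives of $h$ enter, since second derivatives are not controlled; in particular I need to track that replacing $\bar g$ by $g=\bar g+h$ alters the unit normal and the induced area element by quantities bounded solely in terms of $|\bar\nabla h|_{\bar g}$ and $\lambda$. The capacity comparison and the invocations of Propositions \ref{prop-pmt-c-w} and \ref{prop-c-w-localized} are then routine.
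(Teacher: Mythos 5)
Your proposal is correct and takes essentially the same approach as the paper's proof: compute the model quantities $\bar \c_{_{(\Sigma_r, \Sigma_\delta)}} = \frac{|\Sigma|_\sigma}{4\pi}\left(\int_r^\delta a^{-2}\right)^{-1}$ and $\bar W(\Sigma_r) = 4 a'(r)^2 |\Sigma|_\sigma$, transfer them to $g$ via the radial harmonic competitor and a mean-curvature comparison controlled by $|\bar \nabla h|_{\bar g}$ alone, and conclude via Propositions \ref{prop-pmt-c-w} and \ref{prop-c-w-localized}. For the step you flag as delicate, the paper handles it identically, invoking the estimate $H - \bar H = |\bar \Pi|_{\bar g}\, O(|h|_{\bar g}) + O(|\bar \nabla h|_{\bar g})$ from formula (2.33) of \cite{MP21}, which confirms that only first derivatives of $h$ enter.
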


\vspace{.1cm}

\begin{rem} \label{rem-pmt-cone-horn}
If $ a (r) = r^b$ for a constant $ b >0$, 
then \eqref{eq-pmt-c-w-a-r} translates to 
$$ \lim_{r \to 0} \,  r^{3b -2} \left( 1 + r | \bar \nabla h |_{\bar g} \right) = 0 \ \Rightarrow \ \m \ge 0 . $$ 
This in particular implies, if $g$ has a conical or $r^b$-horn type singularity modeled on $ \bar g = d r^2 + r^{2b} \sigma$ near $r=0$, 
then, under a mild asymptotic assumption of
$$  \lambda^{-1} \le | \bar g + h |_{\bar g} \le \lambda \ \ \text{and} \ \ r | \bar \nabla h |_{\bar g} = O (1), $$
one has  
``$ b > \frac23  \Rightarrow \m \ge 0 $". 
(Related results on PMT with isolated singularities can be found in \cite{ST18, LM17, DSW23}).
\end{rem}

\begin{proof}[Proof of Theorem \ref{thm-pmt-cw}] 
Let $ \Sigma_r = \{ r \} \times \Sigma$, $ r \in (0, \delta ]$. 
For  $ s \in (0, \delta)$, 
let $ \bar \c_{_{( \Sigma_s, \Sigma_\delta) } }$, $\bar W (\Sigma_s)$
denote the capacity of $\Sigma_s$ relative to $\Sigma_{\delta}$, the Willmore functional of $\Sigma_s$, respectively,
with respect to $\bar g$. 

The function 
$ u ( r) = \left(  \int_s^\delta a(x)^{-2} \, d x \right)^{-1} \,  \int_s^r a(x)^{-2} \, d x$ 
is $\bar g$-harmonic on $ [s , \delta ] \times \Sigma $ with $ u = 0 $ at $\Sigma_s$ and 
$u = 1$ at $ \Sigma_\delta$. 
This implies 
\be \label{eq-bar-c-s-d}
\bar \c_{_{( \Sigma_s, \Sigma_\delta) } }  = \frac{ |\Sigma|_{\sigma} }{4\pi}  \left( \int_s^\delta a(x)^{-2} \, d x \right)^{-1} , 
\ee
where $ | \Sigma|_\sigma$ is the area of $(\Sigma, \sigma)$. 
The mean curvature $\bar H$ of $\Sigma_s$ with respect to $\bar g$ is $ \bar H = 2 a^{-1} a' $. Hence,  
$$ \bar W (\Sigma_s) =  4 a'(s)^2  | \Sigma |_\sigma  . $$

We compare  $ \bar \c_{_{( \Sigma_s, \Sigma_\delta) } }$ and $  \c_{_{( \Sigma_s, \Sigma_\delta) } }$. 
Let $\bar \nabla$, $\nabla $ and $d V_{\bar g}$, $d V_g$ denote the gradient, the volume form 
with respect to $\bar g$, $g$, respectively.
Since $\c_{_{( \Sigma_s, \Sigma_\delta) } }$ equals the infimum of the $g$-Dirichlet energy of functions that 
vanish at $ \Sigma_s$ and equal $1$ at $\Sigma_\delta$, we have 
\be \label{eq-c-bar-c}
\begin{split}
 \c_{_{( \Sigma_s, \Sigma_\delta) } } \le  & \ \int_{[ s, \delta ] \times \Sigma} | \nabla u |_g^2 \, d V_g \\
 \le & \  C \int_{[ s, \delta ] \times \Sigma} | \bar \nabla u |_{\bar g}^2 \, d V_{\bar g} = C \, \bar  \c_{_{( \Sigma_s, \Sigma_\delta) } } .
\end{split}
\ee
Here $C > 0$ denotes a constant independent on $s$ and we have used the 
assumption $ \lambda^{-1} \le | g |_{\bar g} \le \lambda$. 

We also compare $ \bar W (\Sigma_s) $ and $ W (\Sigma_s) $. Let $\bar \Pi$ denote the second fundamental form 
of $\Sigma_s$ with respect to $\bar g$. Direct calculation shows 
\be
H - \bar H  =  | \bar \Pi |_{\bar g} \, O ( | h |_{\bar g} )  + O ( | \bar \nabla h |_{\bar g} ) . 
\ee  
(For instance, see formula (2.33) in \cite{MP21} and the proof therein.) 
Therefore, 
\be
\begin{split}
H^2 = & \ \bar H^2  + \left[  | \bar \Pi |_{\bar g} \, O ( | h |_{\bar g} )  + O ( | \bar \nabla h |_{\bar g} )   \right]^2 
 + \bar H  \left[  | \bar \Pi |_{\bar g} \, O ( | h |_{\bar g} )  + O ( | \bar \nabla h |_{\bar g} ) \right] \\
= & \  \bar H^2  + | \bar \Pi |_{\bar g}^2 \, O ( | h |_{\bar g} ) +  \bar H  \, O ( | \bar \nabla h |_{\bar g} )  +   O ( | \bar \nabla h |_{\bar g}^2 ) .
\end{split}
\ee
Let $ d \sigma_g$, $ d \sigma_{\bar g}$ denote the area form on $\Sigma_s$ with respect to $g$, $\bar g$, respectively. 
Then 
\be
\begin{split} 
  \int_{\Sigma_s}  H^2 \, d \sigma_{ g}  \le  & \ C \int_{\Sigma_s}  H^2  \, d \sigma_{ \bar g}  \\
 = & \  C \, \bar W (\Sigma_s)  
 +    \left[  | \bar \Pi |_{\bar g}^2 \, O ( | h |_{\bar g} ) 
  +  \bar H  \, O ( | \bar \nabla h |_{\bar g} ) +  O ( | \bar \nabla h |_{\bar g}^2 )   \right] \,  | \Sigma_s |_{\bar g} . 
\end{split}
\ee
Plugging in $ \bar W (\Sigma_s) = 4 {a'}^2 | \Sigma |_\sigma $, $ | \bar \Pi |^2_{\bar g} = 2 a^{-2} {a'}^2 $, and 
$ \bar H^2 = 4  a^{-2} {a'}^2  $, we have
\be \label{eq-w-bar-w}
\begin{split} 
 W (\Sigma_s) 
   \le & \  C \, | \Sigma|_{\sigma}  \left[ {a'}^2 + {a'}^2 \, | h |_{\bar g} 
   + a a' | \bar \nabla h |_{\bar g}  +  a^2 | \bar \nabla h |_{\bar g}^2  \right]   \\
 \le & \  C \, | \Sigma|_{\sigma} \left[ {a'}^2 ( 1 + | h |_{\bar g}) +  a^2 | \bar \nabla h |_{\bar g}^2  \right] .  
\end{split}
\ee 
As $ | h |_{\bar g} $ is bounded by assumption, it follows from \eqref{eq-bar-c-s-d}, \eqref{eq-c-bar-c} and \eqref{eq-w-bar-w} that
\be
\c_{_{( \Sigma_s, \Sigma_\delta) } } W (\Sigma_s)^\frac12 \le  C  \left( \int_s^\delta a(x)^{-2} \, d x \right)^{-1} 
   \left[ |a'|  +  a  | \bar \nabla h |_{\bar g}  \right] .
\ee
\eqref{eq-pmt-c-w-a-r} now follows from Propositions \ref{prop-pmt-c-w} and \ref{prop-c-w-localized}.
\end{proof}

\vspace{.1cm}

\begin{rem}
The negative mass Schwarzschild manifolds are known to have an $r^b$-horn type singularity with $b = \frac23$
(see \cite{BJ13, ST18} for instance). 
In \cite{BJ13}, Bray and Jauregui developed a theory of ``zero area singularities" (ZAS) modeled on the singularity of these manifolds.
Among other things, they introduced a notion of the mass of ZAS.  
In \cite[Theorem 4.8]{Robbins09}, Robbins showed the ADM mass of an asymptotically flat $3$-manifold with a single ZAS 
is at least the ZAS mass. 
The conclusion on the $r^b$-horn type singularity in Remark \ref{rem-pmt-cone-horn} 
 can also be reached via the results on ZAS in \cite{BJ13, Robbins09}. 
\end{rem}

We end this section by applying \eqref{eq-main-m-c-ratio} to obtain 
information of $ W( \cdot)$  in the negative mass Schwarzschild manifolds.

\begin{prop} \label{prop-singularity}
Consider a spatial Schwarzschild manifold  with {\sl negative} mass, i.e. 
$$ (M_{\m} , g_{\m} ) = \left( (0, \infty) \times S^2 , \frac{1}{ 1 + \frac{2m}{r} } dr^2 + r^2 \sigma_o \right) , $$
where $ (S^2, \sigma_o)$ denotes the standard unit sphere and the mass $ \m = - m  $ is negative. 
Let $ \Sigma  \subset M_{\m} $ be any connected, closed surface that is homologous to a slice $\{r \} \times S^2$. 
Let 
$ r_{max} (\Sigma) = \max_{_{ x \in \Sigma } } \, r (x). $
Then 
\be \label{eq-W-in-SC-n-m}
W(\Sigma) \ge  16 \pi \left( 1 + \frac{2m}{ r_{max} (\Sigma)  } \right) .
\ee
In particular, $ W (\Sigma ) \to  \infty $ as $ r_{max} (\Sigma)  \to 0$. 
\end{prop}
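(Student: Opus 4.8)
The plan is to bypass the mass--capacity inequality and argue directly from the intrinsic and extrinsic geometry of $\Sigma$. (Note that \eqref{eq-main-m-c-ratio} applied to the exterior of $\Sigma$ only gives $W(\Sigma)\ge 16\pi(1+m/\c_{_\Sigma})^2$, which is \emph{not} sharp on slices, since $\c_{_{\{r\}\times S^2}}>r$.) First I would record the model computation: a slice $\Sigma_r=\{r\}\times S^2$ has $H=\tfrac{2}{r}\sqrt{1+\tfrac{2m}{r}}$, so $W(\Sigma_r)=16\pi\bigl(1+\tfrac{2m}{r}\bigr)$; thus the asserted bound is exactly $W(\Sigma)\ge W(\Sigma_{r_{max}})$, with equality for slices. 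This identifies the slices as extremizers and tells me the estimate should be sharp term by term on them.

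Next I would reduce the statement to a curvature integral. By the Gauss equation together with Gauss--Bonnet,
$$\int_\Sigma H^2 \;=\; 8\pi\chi(\Sigma)+2\int_\Sigma|\mathring{\mathrm{II}}|^2\,dA-4\int_\Sigma\bar K\,dA,$$
where $\bar K$ is the ambient sectional curvature of $T\Sigma$ and $\mathring{\mathrm{II}}$ is the trace-free second fundamental form. Since $g_{\m}$ is scalar flat, $\bar K=-\Ric(\nu,\nu)$. Using that $g_{\m}=\Omega^4\delta$ is globally conformally flat in isotropic coordinates (with $\Omega=1-\tfrac{m}{2\rho}$ harmonic), that the density $|\mathring{\mathrm{II}}|^2\,dA$ is a pointwise conformal invariant, and the classical Willmore inequality $\int_\Sigma H_\delta^2\,dA_\delta\ge 16\pi$ in $\R^3$ (valid for every genus), I get $8\pi\chi+2\int_\Sigma|\mathring{\mathrm{II}}|^2=\int_\Sigma H_\delta^2\,dA_\delta\ge 16\pi$, hence
$$\int_\Sigma H^2 \;\ge\; 16\pi+4\int_\Sigma\Ric(\nu,\nu)\,dA.$$
Both steps are equalities on a slice, so it suffices to prove $\int_\Sigma\Ric(\nu,\nu)\,dA\ge \tfrac{8\pi m}{r_{max}}$.

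For this last inequality I would use the explicit curvature: in the orthonormal frame adapted to $\{r=\text{const}\}$ one has $\Ric=\tfrac{m}{r^3}\,\mathrm{diag}(2,-1,-1)$, so with $\cos\alpha=\la\nu,\partial_r/|\partial_r|\ra$ we get $\Ric(\nu,\nu)=\tfrac{m}{r^3}(3\cos^2\alpha-1)$, which integrates to exactly $\tfrac{8\pi m}{r_{max}}$ on $\Sigma_{r_{max}}$. The tool I would exploit is the static structure of the metric: the lapse $N=\sqrt{1+\tfrac{2m}{r}}$ is harmonic and satisfies $\Hess N=N\,\Ric$ (equivalently $rN\partial_r$ is a closed conformal Killing field, $\nabla(rN\partial_r)=N g$). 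This rewrites $\Ric(\nu,\nu)=N^{-1}\Hess N(\nu,\nu)$ in divergence form and yields exact flux identities such as $\int_\Sigma\tfrac{\cos\alpha}{r^2}\,dA=4\pi$ (Gauss' law for the divergence-free field $\Ric(rN\partial_r,\cdot)=-2\nabla N$, evaluated by passing to the enclosing slice). Combining such identities with the pointwise constraint $r\le r_{max}$ and a comparison of $\Sigma$ against $\Sigma_{r_{max}}$ should close the estimate.

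The main obstacle is precisely this final inequality. Because $\Ric(\nu,\nu)$ is \emph{quadratic} in $\nu$ (through $\cos^2\alpha$), it is not the flux of any fixed ambient field, so the divergence theorem yields only the linear identity above; moreover $3\cos^2\alpha-1$ changes sign, and the naive pointwise bound $\tfrac{3\cos^2\alpha-1}{r^3}\ge\tfrac{2\cos\alpha}{r_{max}\,r^2}$ fails for $\cos\alpha\in(-\tfrac13,1)$. The delicate point is therefore to extract the $L^\infty$ quantity $r_{max}$ from these $L^2$-type curvature integrals while preserving sharpness on slices; I expect to do this through the static potential, writing $\Ric(\nu,\nu)=-N^{-1}\bigl(\Delta_\Sigma N+H\,\partial_\nu N\bigr)$ after $\Delta N=0$, together with the coarea formula for $r|_\Sigma$. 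Once $\int_\Sigma\Ric(\nu,\nu)\,dA\ge\tfrac{8\pi m}{r_{max}}$ is established the Proposition follows, and the limit $W(\Sigma)\to\infty$ as $r_{max}\to 0$ is immediate.
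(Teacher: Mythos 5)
Your proof is incomplete at exactly the point where the content of the Proposition lies. The reduction steps are fine (the Gauss equation plus Gauss--Bonnet, the identity $\bar K=-\Ric(\nu,\nu)$ from scalar flatness, the conformal invariance of $\int_\Sigma|\mathring{\mathrm{II}}|^2\,dA$ under $g_{\m}=(1-\tfrac{m}{2\rho})^4\delta$, and Willmore's inequality all check out, and each is an equality on slices), but they only convert the claim into the new inequality $\int_\Sigma\Ric(\nu,\nu)\,dA\ge \tfrac{8\pi m}{r_{max}}$, which you do not prove. Your own discussion documents why the available tools fail there: $\Ric(\nu,\nu)=\tfrac{m}{r^3}(3\cos^2\alpha-1)$ is quadratic in $\nu$, hence not the flux of a fixed field (the static potential only yields the linear identity $\int_\Sigma\cos\alpha\, r^{-2}\,dA=4\pi$), and the pointwise comparison fails on the range $\cos\alpha\in(-\tfrac13,1)$. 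The phrases ``should close the estimate'' and ``I expect to do this'' mark a genuine gap, not a routine verification. Worse, it is not clear your reduced inequality is even true for all $\Sigma$ in the stated class: by discarding the Willmore slack $\int_\Sigma H_\delta^2\,dA_\delta-16\pi\ge 0$ you are asking for something strictly stronger than the Proposition on non-round surfaces, and nothing in your argument rules out a perturbation of a slice for which the Ricci integral drops below $8\pi m/r_{max}$ while the Willmore slack compensates.

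The detour was also premised on a miscalculation. The paper's proof is precisely the route you dismissed: rewrite \eqref{eq-main-m-c-ratio} with $\m=-m$ as $\bigl(\tfrac{1}{16\pi}W(\Sigma)\bigr)^{1/2}\ge 1+\tfrac{m}{\c_{_\Sigma}}$, use monotonicity of capacity under enclosure to get $\c_{_\Sigma}\le \c_{_{\Sigma_*}}$ with $\Sigma_*=\{r_{max}\}\times S^2$, and compute $\c_{_{\Sigma_*}}=\tfrac{m}{(1+2m/r_{max})^{1/2}-1}$. Although you are right that $\c_{_{\Sigma_*}}>r_{max}$, sharpness does not require $\c_{_{\Sigma_*}}=r_{max}$: the exact identity is $1+\tfrac{m}{\c_{_{\Sigma_*}}}=\bigl(1+\tfrac{2m}{r_{max}}\bigr)^{1/2}$, so squaring gives $W(\Sigma)\ge 16\pi\bigl(1+\tfrac{2m}{r_{max}}\bigr)$ with equality on slices --- the mass-capacity bound is sharp term by term. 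You should either carry out this three-line capacity argument, or supply an actual proof of $\int_\Sigma\Ric(\nu,\nu)\,dA\ge 8\pi m/r_{max}$; as written, the proposal establishes neither.
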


\begin{proof}
\eqref{eq-main-m-c-ratio} implies 
\be
  \left( \frac{1}{16 \pi} W(\Sigma) \right)^\frac12 \ge 1 + \frac{m}{ \c_{_\Sigma}} . 
\ee
Let $ \Sigma_* =   \{ r_{max} (\Sigma) \} \times \mathbb{S}^2$. 
As $ \Sigma_* $ encloses $ \Sigma$, 
\be \label{eq-compare-c}
 \c_{_{\Sigma_*}} \ge \c_{_\Sigma} . 
\ee 
Since $ \m = - m < 0 $, the above implies 
\be \label{eq-W-2}
  \left( \frac{1}{16 \pi} W(\Sigma) \right)^\frac12 \ge 1 + \frac{m}{ \c_{_{\Sigma_* } } } .
\ee
Direct calculation gives
\be \label{eq-c-symmetry}
 \c_{_{\Sigma_*} } =  \frac{ m }{\left( 1 + \frac{2m}{ r_{max} (\Sigma) } \right)^\frac12 - 1 }  . 
\ee
\eqref{eq-W-in-SC-n-m} follows from \eqref{eq-W-2} and \eqref{eq-c-symmetry}. 
\end{proof}

Proposition \ref{prop-singularity} gives another perspective of 
the singularity of $(M_{\m}, g_{\m}) $ via the Willmore functional $W(\cdot)$.

\section{Bounding $\m$ via $ \c (M,g)$}  \label{sec-pmt-cm-positive}

Let $(M, g)$ be given in the definition of $\c (M,g)$ in \eqref{eq-inf-c}. 
In this section, we relate $ \m $ and $ 2 \c (M,g)$ assuming $\c (M,g) > 0$.
We begin with a characterization of $ \c (M, g) > 0 $ which follows from standard arguments on harmonic functions. 

\begin{prop} \label{prop-c-m-positive}
Let $\c (M,g)$ be defined in \eqref{eq-inf-c}. Then 
$\c (M, g) > 0 $ if and only if there exists 
a harmonic function $ w $ on $(M,g)$ such that $ 0 < w < 1 $ on $M$ and $ w (x) \to 1$ at $\infty$ (i.e. as $x \to p$).  
\end{prop}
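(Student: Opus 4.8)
The plan is to realize the harmonic function $w$ as a monotone limit of capacitary potentials, and to detect the dichotomy ``the limit is constant'' versus ``the limit is non-constant'' through conservation of flux near the asymptotically flat $\infty$. For each large $k$, let $u_k$ be the harmonic function on $E_{_{\Sigma_k}}$ with $u_k = 0$ on $\Sigma_k$ and $u_k \to 1$ at $p$, so that $4\pi\,\c_{_{\Sigma_k}} = \int_{E_{_{\Sigma_k}}} |\nabla u_k|^2$ and $0 < u_k < 1$ in the interior by the maximum principle. First I would observe that $\{u_k\}$ is monotone increasing: for $j < k$ the difference $u_k - u_j$ is harmonic on $E_{_{\Sigma_j}}$, tends to $0$ at $p$, and is positive on $\Sigma_j$, so the minimum principle on this asymptotically flat end gives $u_k \ge u_j$. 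Being bounded by $1$, the sequence converges pointwise, and interior elliptic estimates from the uniform bound $0 \le u_k \le 1$ upgrade this to $C^\infty_{loc}$ convergence to a harmonic $w$ with $0 \le w \le 1$; since each $u_k \to 1$ at $p$ and $w \ge u_k$, one checks $w \to 1$ at $p$. The strong maximum principle then produces a clean dichotomy on the connected manifold $M$: either $w \equiv 1$, or $0 < w < 1$ everywhere.

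To prove that $\c(M,g) > 0$ forces the second alternative, I would argue by contraposition and show $w \equiv 1 \Rightarrow \c(M,g) = 0$. Write $v_k = 1 - u_k$, which is harmonic on $E_{_{\Sigma_k}}$, equals $1$ on $\Sigma_k$, and decays at $p$. By the divergence theorem, the flux $\int_{\Sigma_j} \p_n v_k$ across any fixed interior surface $\Sigma_j$ (unit normal $n$ pointing away from $p$) is independent of the enclosing surface and equals $4\pi\,\c_{_{\Sigma_k}}$ for all $k \ge j$. If $w \equiv 1$, then $v_k \to 0$ in $C^1$ on the compact surface $\Sigma_j$, so this flux tends to $0$, whence $\c(M,g) = \lim_k \c_{_{\Sigma_k}} = 0$. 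Consequently, when $\c(M,g) > 0$ the limit $w$ is exactly the asserted harmonic function with $0 < w < 1$ and $w \to 1$ at $p$.

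For the converse, given any harmonic $w$ with $0 < w < 1$ and $w \to 1$ at $p$, set $\psi = 1 - w > 0$, harmonic, with $\psi \to 0$ at $p$. Asymptotic flatness yields the expansion $\psi = A|x|^{-1} + O(|x|^{-2})$ near $p$, where $A = \tfrac{1}{4\pi}\int_{\Sigma_j}\p_n\psi$ is the flux of $\psi$ away from $p$, independent of $j$. I would first establish $A > 0$: since $\psi > 0$ and $\psi \not\equiv 0$, the vanishing of the leading coefficient would leave a dipole term $\sim c\cdot x\,|x|^{-3}$ that changes sign, violating positivity; iterating forces all multipoles to vanish, and unique continuation then gives $\psi \equiv 0$, a contradiction. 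Next, for each $k$ put $M_k = \max_{\Sigma_k}\psi \in (0,1)$; the functions $\psi/M_k$ and $v_k = 1 - u_k$ are harmonic on $E_{_{\Sigma_k}}$, both vanish at $p$, and satisfy $\psi/M_k \le 1 = v_k$ on $\Sigma_k$, so the maximum principle gives $\psi/M_k \le v_k$. Comparing leading coefficients (spherical averages are ordered) yields $A/M_k \le \c_{_{\Sigma_k}}$, hence $\c(M,g) = \lim_k \c_{_{\Sigma_k}} \ge A > 0$.

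The main obstacle is the non-degeneracy step $A > 0$, equivalently the positivity of the flux of $\psi$ at $p$; this is where the multipole/unique-continuation argument and the positivity of $\psi$ must be combined carefully. A secondary technical point is justifying the maximum (and minimum) principle on the non-compact ends, which requires controlling the boundary contributions at $p$ using the asymptotically flat decay of the relevant harmonic functions.
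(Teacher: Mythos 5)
Your construction is the same as the paper's: the monotone family $u_k$ of capacitary potentials, their $C^\infty_{loc}$ limit, and the strong-maximum-principle dichotomy $u_\infty \equiv 1$ versus $0 < u_\infty < 1$ all appear verbatim in the paper's proof. For the direction ``$\c(M,g)>0 \Rightarrow w$ exists,'' your flux-conservation argument (the flux $\int_{\Sigma_j} \p_n v_k = 4\pi \c_{_{\Sigma_k}}$ across a fixed interior surface, together with $v_k \to 0$ in $C^1$ on $\Sigma_j$ when $u_\infty \equiv 1$) is correct and is actually a cleaner route than the paper's, which introduces auxiliary harmonic functions $\tilde u_k$ on a fixed exterior $E_{_S}$ with boundary value $\beta_k = \min_S u_k$ and invokes elliptic boundary estimates to show $\tilde c_k \to 0$. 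The comparison-of-leading-coefficients device you use ($\psi/M_k \le v_k$ implies $A/M_k \le \c_{_{\Sigma_k}}$, since a nonnegative harmonic function of the form $c\,|x|^{-1} + o(|x|^{-1})$ must have $c \ge 0$) is also legitimate and is exactly the tool the paper uses in Proposition \ref{prop-c-m-c-u}.

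The genuine gap is your justification of the non-degeneracy step $A > 0$. Under the paper's asymptotic flatness hypotheses (decay rate $\tau > \frac12$ only), a decaying harmonic function admits just the expansion $\psi = A|x|^{-1} + o(|x|^{-1})$; there is no exact multipole expansion near $p$, no well-defined ``dipole term'' $c \cdot x\, |x|^{-3}$, and hence no iteration forcing all multipoles to vanish. That argument is valid only when $g$ is exactly Euclidean near $p$, which is not assumed. The step is repairable with a barrier comparison of the same kind you already employ: fix $j$, let $\phi_j$ be harmonic on $E_{_{\Sigma_j}}$ with $\phi_j = 1$ on $\Sigma_j$ and $\phi_j \to 0$ at $p$, and set $m_j = \min_{\Sigma_j} \psi > 0$; the maximum principle gives $\psi \ge m_j \phi_j$ on $E_{_{\Sigma_j}}$, and comparing leading coefficients yields $A \ge m_j\, \c_{_{\Sigma_j}} > 0$. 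Alternatively, note that the paper sidesteps $A$ entirely in this direction: $w$ is an upper barrier forcing $u_\infty \le w < 1$, and if $\c(M,g) = 0$ then $\int_{E_{_{\Sigma_k}}} |\nabla u_k|^2 = 4\pi\, \c_{_{\Sigma_k}} \to 0$, so $u_\infty$ has vanishing Dirichlet energy on compact sets and is constant, hence $\equiv 1$ (as $u_\infty \to 1$ at $\infty$), a contradiction. With either repair your proof is complete; as written, the multipole/unique-continuation step does not go through in this generality.
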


\begin{proof}
For each $k$, let $u_k$ be the harmonic function on $(E_{_{\Sigma_k} }, g)$ with $ u_k \to 1 $ as $x \to \infty$
and $ u_k = 0 $ at $\Sigma_k$. 
Given any surface $ S \in \mathcal{S}$, by the maximum principle, 
$\{ u_k \}$ forms an increasing sequence in the exterior of $S$ relative to $\infty$ (i.e. in $ D_{_S} \setminus \{ p \}$). 
Interior elliptic estimates imply $ \{ u_k \} $ converges to a harmonic function $u_\infty$ on $M$
uniformly on compact sets in any $C^i$-norm. 
The limit $u_\infty$ satisfies $ 0 < u_\infty \le 1$ and $ u_\infty \to 1$ as $ x \to \infty$. 
By the strong maximum principle, either $ u_\infty \equiv 1$ or $ 0 < u_\infty < 1 $.   

Suppose $(M,g)$ admits a harmonic $w $ with $ 0 < w < 1 $ and $ w \to 1 $ at $\infty$. Then 
$w$ is an upper barrier for $\{ u_k \}$, which implies $ u_\infty \le w $, and hence $ 0 < u_\infty < 1 $. 
In this case, $ \c (M, g) $ must be positive. Otherwise, if $\c (M,g) = 0 $, then 
 $\lim_{k \to \infty} \int_{E_{_{\Sigma_k}}  } | \nabla u_k |^2 = 0 $, which would imply  $ \int_{K} | \nabla u_\infty |^2 = 0 $ 
 on any compact set $K $ in $M$, and hence $u_\infty \equiv 1$, a contradiction.  

Next suppose $ \c (M, g) > 0 $. We want to show $ 0 < u_\infty < 1 $. If not, $ u_\infty \equiv 1$ on $M$. 
Pick any surface $ S \in \mathcal{S}$, then $ \lim_{k \to \infty} u_k  = 1 $ at $S$.  
Let $ \beta_k = \min_{S} u_k $ for large $k$. Let $ \tilde u_k $ be the harmonic function on $ E_{_S} $
with $ \tilde u_k \to 1$ at $\infty$ and $ \tilde u_k = \beta_k $ at $S$.
By the maximum principle, $ u_k \ge \tilde u_k $. Therefore, 
$ \tilde c_k \ge c_{k}$, where $\tilde c_k$, $c_k$ are the coefficients in the expansions
$$ \tilde u_k = 1 - \tilde c_k |x|^{-1} + o ( | x|^{-1} ) , $$
$$ u_k = 1 - c_k |x|^{-1} + o ( |x|^{-1} ) ,  $$
as $x \to \infty$. Here we have $c_k = \c_{_{S_k}} $ and 
$$ 4 \pi \tilde c_k = \lim_{r \to \infty} \int_{|x| = r} \frac{\p \tilde u_k}{\p  \nu} = \int_{S } \frac{\p \tilde u_k}{\p \nu} , $$
where $ \nu$ denotes the corresponding unit normal pointing to $\infty$.  
Elliptic boundary estimates applied to $w_k = \tilde u_k - \beta_k$ shows
$$ \lim_{k \to \infty}  \max_{S} | \nabla  w_k | = 0 . $$
 Consequently, $ \tilde c_k \to 0$ as $ k \to \infty$.  
 Combined with $ \tilde c_k \ge \c_k $, this shows 
 $$ \c (M,g) = \lim_{ k \to \infty}  \c_{_{S_k}}  = 0 , $$
which is a contradiction. Therefore, $ 0 < u_\infty < 1 $.
This completes the proof.
\end{proof}

\begin{rem}
One may further require $w $ satisfies $ \inf_M w = 0 $ in Proposition \ref{prop-c-m-positive}. 
To see this, it suffices to examine the proof beginning with assuming  $ \c (M,g) > 0 $.
In this case, we have shown $  0 < u_\infty < 1 $ on $M$. 
Suppose $ \inf_M u_\infty > 0 $, consider 
$ v  =  ( 1 - \inf_M u_\infty )^{-1}   ( u_\infty - \inf_M u_\infty ) $. 
Then $ v < u_\infty$ and $ v $ also acts as a barrier for $ \{ u_k \}$, which implies $u_\infty \le v $, a contradiction.
Hence, $ \inf_M u_\infty = 0 $. 
\end{rem}

Next, we focus on the case in which the function $u$ tends to zero at ``the other end". 

\begin{prop} \label{prop-c-m-c-u}
Suppose there is a harmonic function $ u $ on $(M,g)$ with $ 0 < u < 1 $, $ u (x) \to 1$ at $\infty$ (i.e. as $x \to p$),
and $ \lim_{k \to \infty} \max_{\Sigma_k} u = 0 $. 
Then 
\be \label{eq-c-m-c-u}
\c (M, g) = \mathcal{C},
\ee
where $\mathcal{C} > 0 $ is the coefficient in the expansion of 
$$ u = 1 - \C |x|^{-1} + o ( | x|^{-1} )  $$
in the AF end.
\end{prop}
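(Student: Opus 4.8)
The plan is to identify the limit harmonic function coming from Proposition~\ref{prop-c-m-positive} with the given $u$, and then to match fluxes across a fixed surface. Recall that for large $k$ the function $u_k$ with $\Delta u_k = 0$ on $E_{_{\Sigma_k}}$, $u_k = 0$ at $\Sigma_k$, and $u_k \to 1$ at the AF $\infty$ has the expansion $u_k = 1 - \c_{_{\Sigma_k}} |x|^{-1} + o(|x|^{-1})$, and that $\c(M,g) = \lim_{k\to\infty}\c_{_{\Sigma_k}}$. Set $m_k = \max_{\Sigma_k} u$, so that $m_k \to 0$ by hypothesis. First I would compare $u$ and $u_k$ on $E_{_{\Sigma_k}}$ by the maximum principle, exactly as in the proof of Proposition~\ref{prop-c-m-positive}: since $u_k = 0 \le u$ on $\Sigma_k$ while $u - u_k \to 0$ at $\infty$, one gets $u_k \le u$; and since $u - u_k = u \le m_k$ on $\Sigma_k$ while $u - u_k \to 0$ at $\infty$, one gets $u - u_k \le m_k$ everywhere. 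Hence $\phi_k := u - u_k$ is harmonic on $E_{_{\Sigma_k}}$ with $0 \le \phi_k \le m_k$.

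Next I would convert the coefficient $\C$ and each capacity $\c_{_{\Sigma_k}}$ into a boundary flux across one fixed surface. Fix any $S \in \mathcal{S}$. Since $u$ is harmonic on all of $M$ and $u = 1 - \C|x|^{-1}+o(|x|^{-1})$, the divergence theorem on the region between $S$ and large coordinate spheres (where $u$ is harmonic) gives $\int_S \p_\nu u = 4\pi\C$, with $\nu$ the unit normal pointing toward $\infty$. The same argument applied to $u_k$, which is harmonic on $E_{_S}\subset E_{_{\Sigma_k}}$ for $k$ large, gives $\int_S \p_\nu u_k = 4\pi\c_{_{\Sigma_k}}$ (this is the flux identity already used in Proposition~\ref{prop-c-m-positive}). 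Subtracting, $4\pi\big(\C - \c_{_{\Sigma_k}}\big) = \int_S \p_\nu \phi_k$.

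It then remains to show $\int_S \p_\nu\phi_k \to 0$. I would fix a bounded open set $\Omega$ with $S \subset\subset \Omega$ and $\bar\Omega\subset M$; because $D_k\nearrow N$, one has $\Omega\subset E_{_{\Sigma_k}}$ for all large $k$, so each $\phi_k$ is harmonic on the fixed set $\Omega$ with $\|\phi_k\|_{L^\infty(\Omega)} \le m_k$. Interior elliptic gradient estimates then yield $\max_S |\nabla\phi_k| \le C\, m_k \to 0$, whence $\int_S \p_\nu \phi_k \to 0$ and therefore $\c(M,g) = \lim_{k\to\infty} \c_{_{\Sigma_k}} = \C$; in particular $\C = \c(M,g) > 0$ since a harmonic $u$ of this type exists (Proposition~\ref{prop-c-m-positive}). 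The only delicate point, and the step I expect to be the main obstacle, is making the gradient bound uniform in $k$: I must ensure that the estimating domain $\Omega$ is fixed and that $\phi_k$ is genuinely harmonic on it for all large $k$, so the elliptic constant $C = C(\Omega, S, g)$ is independent of $k$. This is precisely what the two-sided barrier $0\le\phi_k\le m_k$ together with the exhaustion property $D_k\nearrow N$ secures, and it is the direct analogue of the estimate on $w_k = \tilde u_k - \beta_k$ in the proof of Proposition~\ref{prop-c-m-positive}.
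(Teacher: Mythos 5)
Your argument is correct, and for the crucial half of the equality it takes a genuinely different route from the paper's. Both proofs begin with the same comparison $u_k \le u$ on $E_{_{\Sigma_k}}$, which already gives $\C \le \c_{_{\Sigma_k}}$ and hence $\C \le \c(M,g)$. For the reverse inequality, however, the paper stays entirely at the level of barriers: with $\alpha_k = \max_{\Sigma_k} u$ it uses the rescaled comparison $\frac{1}{1-\alpha_k}(u-\alpha_k) \le u_k$ and simply reads off the leading coefficients at the AF infinity, obtaining $\C \ge (1-\alpha_k)\,\c_{_{\Sigma_k}}$ and the constant-free rate $0 \le \c_{_{\Sigma_k}} - \C \le \frac{\alpha_k}{1-\alpha_k}\C$. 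You work instead with the unrescaled difference $\phi_k = u - u_k$, and your flux detour is genuinely necessary there: the two-sided bound $0 \le \phi_k \le m_k$ by itself yields nothing about coefficients, since $(\c_{_{\Sigma_k}} - \C)|x|^{-1} \le m_k$ is vacuous as $|x| \to \infty$; so you convert both coefficients into fluxes through a fixed $S \in \mathcal{S}$, write $4\pi\(\C - \c_{_{\Sigma_k}}\) = \int_S \p_\nu \phi_k$, and kill the right-hand side with interior gradient estimates on a fixed neighborhood $\Omega$ --- which is precisely the mechanism the paper deploys in the proof of Proposition \ref{prop-c-m-positive} (the estimate on $w_k = \tilde u_k - \beta_k$ there), transplanted to this proposition. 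Your attention to the uniformity of the elliptic constant (fixed $\Omega$ with $\bar\Omega \subset M$, so that $\Omega \subset E_{_{\Sigma_k}}$ for all large $k$) is exactly the right precaution and makes the estimate legitimate. Comparing what each buys: the paper's rescaling trick is shorter and uses nothing beyond the maximum principle and the expansions themselves, while your version produces the explicit quantitative bound $|\c_{_{\Sigma_k}} - \C| \le C(S,\Omega,g)\, m_k$ and makes the convergence of the capacitary functions visible, at the cost of invoking the flux identity $\int_S \p_\nu u = 4\pi\C$ --- which tacitly requires that the $o(|x|^{-1})$ expansion of a harmonic function on the AF end integrates correctly against coordinate spheres; this standard fact is also used in the paper's proof of Proposition \ref{prop-c-m-positive}, so it is fair game here.
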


\begin{proof}
Let $u_k$ be the harmonic function on $(E_{_{\Sigma_k}}, g)$ with $ u_k \to 1 $ at $\infty$
and $ u_k = 0 $ at $\Sigma_k$. Then $ u_k \le u $,  which implies $ \mathcal{C} \le c_k $, 
where $c_k = \c_{_{\Sigma_k}} $ is the coeffiicent in 
$$ u_k = 1 - c_k |x|^{-1} + o ( |x|^{-1} )  $$ 
as $x \to \infty$. This shows 
$$ \C \le \lim_{k \to \infty} \c_{_{\Sigma_k} }  = \c (M,g) . $$

To show the other direction, consider $ \alpha_k = \max_{\Sigma_k} u $. 
On $E_{_{\Sigma_k}}$, by the maximum principle, 
$ \frac{ 1}{1 - \alpha_k} ( u - \alpha_k ) \le u_k $, which implies 
$ \frac{1}{1 - \alpha_k} C \ge c_k $. As $ \alpha_k \to 0$, this gives 
$$ \C \ge \lim_{ k \to \infty } ( 1  - \alpha_k ) \c_{_{\Sigma_k} }   = \c (M, g) . $$ 
Therefore, $ \C   = \c (M, g)$.
\end{proof}

We are now in a position to derive applications of \eqref{eq-gdu-mc}.

\begin{thm} \label{thm-c-m-u-limit}
Let $N$ be a noncompact,  connected, orientable $3$-manifold
admitting an exhaustion sequence of precompact domains $D_k$ with
connected boundary $\p D_k$, $ k = 1 , 2, \ldots$. 
Suppose $H_2 (N) = 0 $. Let $ M = N \setminus \{ p \}$ where $ p  $ is a point in $N$. 
Let $g$ be a smooth metric with nonnegative scalar curvature on $M$ such that  $g$ is  asymptotically flat near $p$. 
Assume there is a harmonic function $ u $ on $(M,g)$ with $ 0 < u < 1 $, $ u (x) \to 1$ as $x \to p$,
and $ \displaystyle \lim_{k \to \infty} \max_{{\p D_k}} u = 0 $. 
Then
\begin{enumerate}
\item[(i)] The limit $ \displaystyle \lim_{t \to 0} \int_{u^{-1} (t) } | \nabla u |^2 $ exists (finite or $\infty$),
where $t \in (0,1)$ is a regular value of $u$; and

\item[(ii)] 
$ \displaystyle  \m \ge 2  \c (M,g)  \left[ 1 -  \lim_{t \to 0}  \left( \frac{1}{4 \pi} \int_{u^{-1} (t)} | \nabla u |^2 \right)^\frac12  \right] $. 

\end{enumerate}
\end{thm}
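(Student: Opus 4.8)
The plan is to apply \eqref{eq-gdu-mc} to the asymptotically flat exterior regions cut out by the super-level sets of $u$, and then to let the level value tend to $0$. For a regular value $t \in (0,1)$ write $\Sigma_t = u^{-1}(t)$ and $\Phi(t) = \int_{\Sigma_t} |\nabla u|^2$. First I would set up these exterior regions. Since $\lim_{k}\max_{\p D_k} u = 0$, the maximum principle gives $\sup_{N \setminus D_k} u \to 0$, so for every $t>0$ the set $\{ u \ge t\}$ is precompact, contains $p$, and (for regular $t$) is bounded by $\Sigma_t$. Thus $\Sigma_t \in \mathcal{S}$ and $E_t := \{u \ge t\}$ is a complete, asymptotically flat manifold with a single end at $p$; by $H_2(N)=0$ the surface $\Sigma_t$ is connected and condition $(\ast)$ holds on $E_t$, exactly as argued in Section \ref{sec-pmt-cm-zero}. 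The rescaled function $\tilde u_t = (1-t)^{-1}(u - t)$ is harmonic on $E_t$, vanishes on $\Sigma_t$, and tends to $1$ at $p$, so it is precisely the potential defining $\c_{_{\Sigma_t}}$. Comparing its expansion at $p$ with $u = 1 - \C |x|^{-1} + o(|x|^{-1})$, where $\C = \c(M,g)$ by Proposition \ref{prop-c-m-c-u}, gives
\bee
\c_{_{\Sigma_t}} = \frac{\C}{1-t}, \qquad |\nabla \tilde u_t|^2 = \frac{1}{(1-t)^2} |\nabla u|^2 .
\eee

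Granting part (i), part (ii) is then immediate. Applying \eqref{eq-gdu-mc} to $(E_t, g)$ and substituting the two identities above yields, for every regular $t$,
\bee
\frac{\m (1-t)}{2 \C} \ge 1 - \frac{1}{1-t} \( \frac{\Phi(t)}{4\pi} \)^{\frac12} .
\eee
Letting $t \to 0$ along regular values (dense by Sard's theorem) and using that $\lim_{t \to 0}\Phi(t)$ exists by (i), the left side tends to $\m/(2\C)$ and the right side to $1 - \lim_{t\to0}(\frac{1}{4\pi}\int_{\Sigma_t}|\nabla u|^2)^{1/2}$; since inequalities pass to limits and $\C = \c(M,g)$, this is the estimate in (ii). If the limit in (i) is $+\infty$ the right side is $-\infty$ and (ii) holds trivially. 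Thus everything reduces to part (i).

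The heart of the matter is therefore (i): the existence of $\lim_{t\to0}\Phi(t)$ in $[0,\infty]$. I would obtain it from a monotonicity formula along the level sets of $u$. The elementary transport identity gives $\Phi'(t) = -\int_{\Sigma_t} H |\nabla u|$, which is not signed; hence $\Phi$ itself is generally not monotone (near $p$ one has $\Phi \to 0$ with $\Phi' < 0$, but near the other end no sign is apparent), and one must instead track a capacity-weighted modification. The mechanism is the Bochner formula $\tfrac12 \Delta |\nabla u|^2 = |\Hess u|^2 + \Ric(\nabla u, \nabla u)$ together with the Gauss equation, which rewrites $\Ric(\nu,\nu)$ (with $\nu = \nabla u / |\nabla u|$) in terms of the scalar curvature $R$, the second fundamental form of $\Sigma_t$, and the intrinsic Gauss curvature $K_{\Sigma_t}$. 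Integrating over $E_t$ and invoking $R \ge 0$ together with the Gauss--Bonnet bound $\int_{\Sigma_t} K_{\Sigma_t} \le 4\pi$ on the connected surface $\Sigma_t$ produces the required sign. Concretely, the quantity whose monotonicity underlies \eqref{eq-gdu-mc} in \cite{M22} depends only on $u$ and its level sets; evaluated on the level sets of the single global function $u$ it is defined and monotone on the whole interval $(0,1)$, hence has a limit as $t \to 0$, and this limit determines $\lim_{t\to0}\Phi(t)$ once one checks that its remaining ingredients (the normalization $\C/(1-t)$ and the flux $\int_{\Sigma_t}|\nabla u|$, which is constant in $t$) converge.

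I expect the main obstacle to be exactly this monotonicity step: identifying the correctly normalized quantity and verifying the sign of its $t$-derivative, since $\Phi$ alone fails to be monotone and the term $\Ric(\nabla u, \nabla u)$ is controlled only after trading Ricci for scalar curvature via the Gauss equation and absorbing the topological contribution through Gauss--Bonnet. A secondary, routine point is the bookkeeping of the boundary-term computations: restricting to regular values via Sard's theorem and discarding the term at $p$ using the asymptotics $|\nabla u| = O(|x|^{-2})$ at the asymptotically flat end.
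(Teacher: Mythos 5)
Your proposal is correct and essentially reproduces the paper's argument: the paper likewise proves (ii) by applying \eqref{eq-gdu-mc} to $(E_t,g)$ with the rescaled potential $(1-t)^{-1}(u-t)$, the identity $\c_{_{\Sigma_t}}=\C/(1-t)$, and Proposition \ref{prop-c-m-c-u}, after verifying connectedness of $\Sigma_t$ via $H_2(N)=0$ and the maximum principle. For (i), the level-set monotone quantity you anticipated is exactly the paper's $\mathcal{B}(t)=\frac{1}{1-t}\left[4\pi-\frac{1}{(1-t)^2}\int_{\Sigma_t}|\nabla u|^2\right]$, whose nondecreasing behavior under $R\ge 0$ is quoted directly from \cite[Theorem 3.2 (ii)]{M22}, precisely the monotonicity underlying \eqref{eq-gdu-mc} that you identified.
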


\begin{proof}
Given a regular value $ t \in (0,1)$ of $u$, let $ \Sigma_t = u^{-1} (t)$. $\Sigma_t$ is a closed, orientable surface in 
$M = N \setminus \{ p \}$. Let $ \Sigma_t^{(1)} $ denote any connected component of $\Sigma_t$.
Since $H_2 (N) = 0 $, $\Sigma_t^{(1)}$ is the boundary of a bounded domain $\Omega_1$ in $N$. If $ p \notin \Omega_1$, then $u$ is 
identically a constant by the maximum principle. Hence, $ \Sigma_t^{(1)}$ encloses $p$. 
As a result, if there are two connected components of $\Sigma_t$, then 
both of them enclose $p$, and thus form the boundary of a bounded domain in $M$. 
By the maximum principle, $u$ is a constant, which is a contradiction. 
Therefore, $\Sigma_t$ is connected. 
Since $t$ is arbitrary, this in particular shows \eqref{eq-gdu-mc} is applicable to $(E_t, g)$, 
where $E_t = \{ u \ge t \} \subset M $ is the exterior of $\Sigma_t$ with respect to $\infty$. 

Applying \eqref{eq-gdu-mc} to $(E_t, g)$, we have
\be  \label{eq-gdu-mc-E-t}
\frac{\m}{ 2  \c_{_{\Sigma_t}} } \ge 1 - 
\left( \frac{1}{4 \pi} \int_{\Sigma_t} | \nabla u_t |^2 \right)^\frac12 .
\ee
Here $ u_t  = \frac{1 }{1-t} ( u - t) $ is the harmonic function on $(E_t, g)$ that tends to $1$ at $\infty$ and equals $0$ at $ \Sigma_t$,
$ \c_{_{\Sigma_t} } = \frac{1}{1-t} \C $, and   
$\C$ is the coefficient in the expansion of 
$$ u = 1 - \C |x|^{-1} + o ( | x|^{-1} )  . $$
It follows from \eqref{eq-gdu-mc-E-t} that  
\be  \label{eq-gdu-mc-E-t-1}
\frac{\m}{ 2  \C } \ge \frac{1}{1-t} - 
\frac{1}{(1-t)^2}  \left( \frac{1}{4 \pi} \int_{\Sigma_t} | \nabla u |^2 \right)^\frac12 .
\ee

Consider the function
$$ \mathcal{B} (t) = \frac{1}{(1-t) }  \left[ 4 \pi - \frac{1}{(1-t)^2} \int_{ \Sigma_t } | \nabla u |^2 \right]  . $$
In \cite[Theorem 3.2 (ii)]{M22}, we showed 
$ \mathcal{B}(t)$ is monotone nondecreasing in $ t$ if $ g$ has nonnegative scalar curvature. 
As a result, 
\bee
\lim_{t \to 0 } \mathcal{B} (t) \  \text{exists}.
\eee
Consequently, 
\bee
\lim_{t \to 0 } \int_{\Sigma_t} | \nabla u |^2  \  \text{exists}. 
\eee
This proves (i). (ii) follows from \eqref{eq-gdu-mc-E-t-1}, (i) and Proposition \ref{prop-c-m-c-u}. 
\end{proof}

We have not assumed $g$ to be complete on $M$ so far. 
In particular, $(M,g)$ in Theorem \ref{thm-c-m-u-limit}
could just be the interior of an AF manifold with boundary $\Sigma$ and the function $u$ may simply be the restriction,  
to the interior, of the harmonic function that tends to $1$ at $\infty$ and vanishes at $\Sigma$. 
In that extreme case, $ \lim_{t \to 0 } \int_{\Sigma_t} | \nabla u |^2  = \int_{\Sigma} | \nabla u |^2$ and 
(ii) reduces to \eqref{eq-gdu-mc}. 

\vspace{.2cm}

If $g$ is complete on $M$, we have the following corollary.

\begin{cor} \label{thm-m-c-Ric}
Let $N$, $p$, $M$, $g$ and $u$ be given as in Theorem \ref{thm-c-m-u-limit}. 
Suppose $(M,g)$ is complete and has Ricci curvature bounded from below.
Then
\be \label{eq-m-2c}
\m \ge 2 \C,
\ee
where $ \C =  \c (M,g)$ is the coefficient in the expansion of 
$$ u = 1 - \C |x|^{-1} + o ( | x|^{-1} )  $$
as $x \to p$. 
\end{cor}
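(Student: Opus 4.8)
The plan is to deduce \eqref{eq-m-2c} directly from Theorem \ref{thm-c-m-u-limit}(ii) by showing that, under the extra hypotheses (completeness and a lower Ricci bound), the boundary term vanishes:
$$
\lim_{t \to 0} \int_{u^{-1}(t)} | \nabla u |^2 = 0 .
$$
Once this is known, Theorem \ref{thm-c-m-u-limit}(ii) gives $\m \ge 2\,\c(M,g)[1 - 0] = 2\,\c(M,g)$, and since $\c(M,g) = \C$ by Proposition \ref{prop-c-m-c-u}, we obtain $\m \ge 2\C$. So the whole proof reduces to killing the limit term, and this is exactly where the two new assumptions enter.

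The core step is a pointwise gradient estimate. Since $u$ is a \emph{positive} harmonic function ($0 < u < 1$) on the complete manifold $(M,g)$ and $\Ric$ is bounded below, say $\Ric \ge -\kappa$ with $\kappa \ge 0$, I would invoke the Cheng--Yau gradient estimate. Fixing radius $R = 1$ and applying the estimate on each geodesic ball $B(x_0, 2)$ — which is relatively compact by Hopf--Rinow and on which $u > 0$ is harmonic — yields a constant $C_0 = C_0(\kappa)$, independent of $x_0$, with $|\nabla \log u| \le C_0$ at $x_0$. As $x_0 \in M$ is arbitrary, this gives the uniform bound
$$
| \nabla u | \le C_0 \, u \quad \text{on } M .
$$

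The remaining ingredient is the flux identity: for every regular value $t \in (0,1)$ one has $\int_{\Sigma_t} | \nabla u | = 4\pi \C$. This follows as in the proof of Theorem \ref{thm-c-m-u-limit}, since $u_t = (1-t)^{-1}(u - t)$ is the capacity potential of $\Sigma_t$ with $\c_{_{\Sigma_t}} = (1-t)^{-1}\C$ and $4\pi \c_{_{\Sigma_t}} = \int_{\Sigma_t} | \nabla u_t | = (1-t)^{-1} \int_{\Sigma_t} | \nabla u |$. Combining this with the gradient bound and with $u \equiv t$ on $\Sigma_t$,
$$
\int_{\Sigma_t} | \nabla u |^2 \le C_0 \, t \int_{\Sigma_t} | \nabla u | = 4\pi C_0 \C \, t \longrightarrow 0 \quad \text{as } t \to 0 .
$$
By Theorem \ref{thm-c-m-u-limit}(i) the limit exists, hence it equals $0$, and \eqref{eq-m-2c} follows. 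The main obstacle — and the only nonroutine point — is securing the gradient estimate with a constant uniform over the noncompact far end where $u \to 0$; this is precisely what completeness (so that balls of fixed radius exist at every point) and the lower Ricci bound (so that Cheng--Yau's constant does not degenerate) provide. Without them the product $t \int_{\Sigma_t} | \nabla u |$ need not tend to zero, which is why the conclusion strengthens that of Theorem \ref{thm-c-m-u-limit} only under these assumptions.
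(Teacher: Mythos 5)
Your argument is exactly the paper's proof: apply the Cheng--Yau gradient estimate (using completeness and the lower Ricci bound) to get the uniform bound $|\nabla u| \le C_0\, u$, combine it with the flux identity $\int_{\Sigma_t} |\nabla u| = 4\pi \C$ to conclude $\int_{\Sigma_t} |\nabla u|^2 \le 4\pi C_0 \C\, t \to 0$, and then invoke Theorem \ref{thm-c-m-u-limit}(ii). Your additional justification of the flux identity via the capacity potential $u_t$ is correct and merely makes explicit what the paper leaves implicit.
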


Corollary \ref{thm-m-c-Ric} relates to a result of Bray \cite{Bray02}.
In \cite[Theorem 8]{Bray02}, Bray proved that, if $(M,g)$ is a complete asymptotically flat $3$-manifold with nonnegative 
scalar curvature which has multiple AF ends and mass $\m$ in a chosen end, then 
$$ \m \ge 2 \C , $$
where $\C$ is the coefficient in the expansion $  u = 1 - \C |x|^{-1} + o ( | x|^{-1} )  $ at the chose end, and 
$u$ is the harmonic function that tends to $1$ at the chosen end and approaches $0$ at all other AF ends. 

Bray's theorem allows $M$ to have more general topology and more than two ends. Its proof made use of the $3$-d PMT. 
Complete manifolds whose ends are all asymptotically flat necessarily have bounded Ricci curvature. 
In this sense, Corollary \ref{thm-m-c-Ric} provides a partial generalization of Bray's result. 

\begin{proof}[Proof of Corollary \ref{thm-m-c-Ric}]
Let $\Sigma_t $ be given in the proof of Theorem \ref{thm-c-m-u-limit}. 
Since $(M, g)$ is complete and has Ricci curvature bounded from below, 
by the gradient estimate of Cheng and Yau \cite{ChengYau75}, 
$ \max_{\Sigma_t}  \, u^{-1} | \nabla u | \le  \Lambda $ 
where $\Lambda$ is a constant independent on $t$. 
Combined with $ \int_{\Sigma_t} | \nabla u | = 4 \pi \C $, this shows
\bee
\frac{1}{4\pi} \int_{\Sigma_t} | \nabla u |^2   \le \C \, \Lambda t  ,
\eee
which implies 
\be \label{eq-app-CY-1}
\lim_{t \to 0} \int_{\Sigma_t} | \nabla u |^2  = 0 .
\ee
It follows from \eqref{eq-app-CY-1} and Theorem \ref{thm-c-m-u-limit} (ii) that $  \m \ge 2 \C $.  
\end{proof}

\begin{rem}
Let $ R$, $ \Ric$ denote the scalar curvature, Ricci curvature of $g$. Since   
$$ R \ge 0 \ \text{and} \ \Ric \ \text{bounded from above}  \ \Rightarrow  \  \Ric \  \text{bounded from below} , $$
Corollary \ref{thm-m-c-Ric} also holds if the assumption of ``$ \Ric$ bounded from below" is replaced by 
``$ \Ric$ bounded from above".  
\end{rem}

\begin{rem} \label{rem-m-c}
As used in Bray's work \cite{Bray02}, the inequality $ \m \ge 2 \C$ has a geometric interpretation that asserts 
 the mass of the conformally deformed metric $ u^4 g $, which might not be complete, is nonnegative. 
Instead of $\m \ge 2 \C$, a weaker inequality $\m \ge \C$ was obtained by Hirsch, Tam and the author in 
\cite{HMT22}. 
\end{rem}
 
\vspace{.4cm}

\noindent {\sl Acknowledgement.}
I thank Christos Mantoulidis who introduced me to the work \cite{MT22}. 
I also thank Professor Xianzhe Dai for several helpful communications. 

\vspace{.2cm}

\end{document}